\DeclareMathSymbol{\varChi}{\mathord}{letters}{88}
\definecolor{red}{rgb}{1.0,0.0,0.0}
\definecolor{blu}{rgb}{0.0,0.0,1.0}
\def\blu#1{{\textcolor{blu}{#1}}}
\definecolor{gre}{rgb}{0.03,0.50,0.03}
\newcommand{\ud}{\,\mathrm{d}}
\def\M{\mathbb M}
\def\R{\mathbb R}
\def\X{\mathbb X}
\def\Y{\mathbb Y}
\def\N{\mathbb N}
\def\W{\mathbb W}
\def\Z{\mathbb Z}
\def\A{\mathbb A}
\def\V{\mathbb V}
\def\Q{\mathbb Q}
\def\P{\mathbb P}
\newtheorem{Theorem}{Theorem}[section]
\newtheorem{Definition}[Theorem]{Definition}
\newtheorem{Proposition}[Theorem]{Proposition}
\newtheorem{Lemma}[Theorem]{Lemma}
\newtheorem{Hypothesis}[Theorem]{Hypothesis}
\newtheorem{Remark}[Theorem]{Remark}
\newtheorem{Example}[Theorem]{Example}
\author{Giorgio Fabbri\footnote{Aix-Marseille Univ. (Aix-Marseille School of Economics), CNRS \& EHESS. 5, Boulevard Maurice Bourdet, 13205 Marseille Cedex 01, France. E-mail: giorgio.fabbri@univ-amu.fr. 
The work of this author has been developed in the framework of the center of excellence LABEX MME-DII (ANR-11-LABX-0023-01).}
 \; and \;  Francesco Russo\footnote{ENSTA ParisTech, Universit\'e Paris-Saclay,  Unit\'e de Math\'ematiques appliqu\'ees, 828, Boulevard des Mar\'echaux,
 F-91120 Palaiseau, France. E-mail: francesco.russo@ensta-paristech.fr. 
 The financial support of this author was partially provided
 by the DFG 
through the CRC ''Taming uncertainty and profiting from randomness and
low regularity in analysis, stochastics and their application''.}}
\title{HJB equations in infinite dimension and optimal
control of stochastic evolution equations
 via generalized Fukushima decomposition }
\date{August 1st 2017}
\begin{document}

\maketitle

\begin{abstract}
A stochastic optimal control problem driven by an abstract evolution equation in a separable Hilbert space is considered. Thanks to the identification of 
the mild solution of the state equation as $\nu$-weak Dirichlet process,
  the value processes is proved to be a real weak Dirichlet process.
 The uniqueness of the corresponding decomposition  is used to prove a
 verification theorem. 

Through that technique several of the required assumptions are milder than 
those employed in previous contributions about 
non-regular solutions of Hamilton-Jacobi-Bellman equations.
\end{abstract}

\bigskip

\bigskip

{\bf KEY WORDS AND PHRASES:} Weak Dirichlet processes in infinite dimension;  
Stochastic evolution equations; Generalized Fukushima decomposition; Stochastic optimal control in Hilbert spaces.

\medskip

{\bf 2010 AMS MATH CLASSIFICATION:} 35Q93, 93E20, 49J20

%\bigskip 

\newpage

\section{Introduction}

The goal of this paper is to show that, if we carefully exploit some
 recent developments in stochastic calculus in infinite dimension, 
we can weaken some of the hypotheses typically demanded in the literature of
 non-regular solutions of Hamilton-Jacobi-Bellman (HJB) equations to prove
 verification theorems and optimal syntheses of stochastic optimal control problems in Hilbert spaces.

As well-known,  the study of a dynamic optimization problem can be linked, via the dynamic programming to the analysis of the related HJB equation, that is, in the context we are interested in, a second order infinite dimension PDE. When this approach can be successfully applied, one can prove a verification theorem and express the optimal control in feedback form (that is, at any time, as a function of the state) using the  solution of the HJB equation. In this case the latter can be identified with the value function of the problem.

In the regular case (i.e. when the value function is $C^{1,2}$, see for instance Chapter 2 of \cite{FabbriGozziSwiech16}) the standard proof of the verification theorem is based on the It\^o formula.
In this paper we show that some recent results in stochastic calculus, in particular Fukushima-type decompositions explicitly suited for the infinite dimensional context, can be used to prove the same kind of result for less regular solutions of the HJB equation.

\smallskip

The idea is the following.
In a previous paper (\cite{FabbriRusso16}) the authors introduced the class of \emph{$\nu$-weak Dirichlet processes} (the definition is recalled in Section \ref{sec:preliminaries}, $\nu$ is a Banach space
strictly associated with a suitable subspace $\nu_0$ of $H$) 
and showed that convolution type processes, and in particular mild solutions
 of infinite dimensional stochastic evolution
 equations (see e.g. \cite{DaPratoZabczyk14}, Chapter 4), belong to this class. By applying this result to the solution of the state equation of a class of stochastic optimal control problems in infinite dimension we are able to show that the value process, that is the value of any given solution of the HJB equation computed on the trajectory taken into account\footnote{
The expression \emph{value process} is sometime used for denoting the value function computed on the trajectory, often the two definition coincide but it is not always the case.}, is a  (real-valued) {\it weak Dirichlet processes} (with respect to a given filtration), a notion introduced in \cite{errami} and subsequently analyzed in  \cite{GozziRusso06}.  Such a process can be written as the sum of a local martingale and a  martingale
orthogonal process, i.e.  having zero covariation with every continuous local martingale. Such decomposition is 
unique and in Theorem \ref{th:decompo-sol-HJB},
we exploit the uniqueness property to characterize  the martingale part of
 the value process as a suitable stochastic integral
 with respect to a Girsanov-transformed Wiener process
which allows  to obtain a substitute of the It\^o-Dynkin formula for solutions of the Hamilton-Jacobi-Bellman equation.
This is possible when the value process associated to the optimal control problem   can be expressed by  
a $C^{0,1}([0,T[ \times H)$ function of the state process, with however a stronger regularity on the first derivative.
 We finally use this expression to prove  the verification result stated in Theorem \ref{th:verification}\footnote{A similar approach is used, when $H$ is  finite-dimensional,
 in \cite{GozziRusso06Stoch}. In that case 
things are simpler and there is not need to use the notion of $\nu$-weak Dirichlet processes and and results that are specifically suited for the infinite dimensional case. In that case $\nu_0$ will be isomorphic to the full space $H$.}.

\medskip

We think the interest of our contribution is twofold. On the one hand we show that recent developments in stochastic 
calculus in Banach spaces, see  for instance \cite{DiGirolamiRusso11Fukushima, DiGirolamiRusso11}, from which we adopt the 
framework related to  generalized covariations  and It\^o-Fukushima formulae, but also other approaches as \cite{cosso_russo15a, leao_ohashi_simas14, vanNeervenVeraarWeis07} 
%for infinite dimensional processes, see for instance \cite{DiGirolamiRusso11Fukushima, DiGirolamiRusso11},
%and indirectly
may have important control theory counterpart applications.
% so that further evolutions in the field, developed so far for 
%unrelated goals, will be able to have a direct impact in the study of stochastic optimal control problem in infinite dimensions.
On the other hand the method we present allows to improve some previous verification results weakening a series of hypotheses. % To better understand this point it is probably necessary to take a step back.

We discuss here this second point in detail.
There are several ways to introduce non-regular solutions of second order HJB equations in Hilbert spaces. They are more precisely surveyed in \cite{FabbriGozziSwiech16} but they essentially are viscosity solutions, strong solutions and the study of the HJB equation through backward SDEs.
\emph{Viscosity solutions} are defined, as in the finite-dimensional case, using test functions that 
locally ``touch'' the candidate solution. The viscosity solution approach was first adapted to the second order Hamilton 
Jacobi equation in Hilbert space in \cite{Lions83I,Lions83II,Lions83III} and then, for the ``unbounded'' case (i.e. including a possibly unbounded generator of a strongly continuous semigroup in the state equation, see e.g. equation (\ref{eq:state})) in \cite{Swiech94}. 
Several improvements of those pioneering studies have been published, including extensions to several specific 
equations but, differently from  what happens in the finite-dimensional case, there are no verification theorems available at the moment for stochastic problems in infinite-dimension that use the notion of viscosity solution.
The \emph{backward SDE approach} can be applied when the mild solution of the HJB equation can be represented 
using the solution of a forward-backward system.
 It was introduced in \cite{Quenez97} 
in the finite dimensional setting
and developed in several works,
 among them \cite{DeFuTe, FuHuTe-quadr,FuhrmanMasieroTessitore11, FuhrmanTessitore02-ann, FuTe-ell}. This method 
only allows to find optimal feedbacks in classes of problems satisfying a specific ``structural condition'', imposing,
 roughly speaking, that the control acts within the image of the noise. The same limitation concerns the \emph{$L^2_{\mu}$ approach} introduced and developed in \cite{Ahmed03} and \cite{GoldysGozzi06}.

In the \emph{strong solutions} approach, first introduced in \cite{BarbuDaPrato83book}, the solution is defined as a 
proper limit of solutions of regularized problems.  Verification results in this framework are given in 
\cite{Gozzi95,Gozzi96,Gozzi02,GozziRouy96}. They are collected and refined in Chapter 4 of \cite{FabbriGozziSwiech16}. 
The results obtained  using \emph{strong solutions} are the main term of comparison for ours both because in this context the verification results are more developed and because we partially work in the same framework by approximating the solution of the HJB equation using solutions of regularized problems. With reference to them our method has some advantages \footnote{Results for specific cases, as boundary control problems and reaction-diffusion equation (see \cite{Cerrai01,Cerrai01-40}) cannot be treated at the moment with the method we present here.}: (i) the assumptions on the cost structure are milder, notably they do not include any continuity assumption on the running cost that is only asked to be a measurable function; moreover the admissible controls are only asked to verify, together with the related trajectories, a quasi-integrability condition of the functional, see Hypothesis \ref{hp:on-l-and-g} and the subsequent paragraph; (ii) we work with a bigger set of approximating functions because we do not require the approximating functions and their derivatives to be uniformly
bounded; (iii)  the convergence of the derivatives of the approximating solution is not necessary and it is replaced by the weaker condition  (\ref{eq:b}). 
This convergence, in different possible forms, is unavoidable in the standard structure of the strong solutions approach and it is avoided here only thanks to the use of Fukushima decomposition in the proof. 
In terms of the last just mentioned two points, our notion of solution is  weaker than those used in the mentioned works,
 we need nevertheless to assume that the gradient of the solution of the HJB equation is continuous as an $D(A^*)$-valued function.

 Even if it is rather simple, it is itself of some interest because, as far as we know, no explicit (i.e. with explicit expressions of the value function and of the approximating sequence) example of \emph{strong solution} for second order HJB in infinite dimension are published so far.

\medskip

The paper proceeds as follows. Section \ref{sec:preliminaries} is devoted to some preliminary notions, notably 
the definition of $\nu$-weak-Dirichlet process and some related results. Section \ref{sec:HJB} focuses on the optimal
 control problem and the related HJB equation. It includes  the key 
decomposition Theorem \ref{th:decompo-sol-HJB}. Section \ref{sec:verification}  concerns the verification theorem. In Section \ref{sec:example} we provide an example of optimal control problem that can solved by using the developed techniques.

\section{Some preliminary definitions and result}
\label{sec:preliminaries}
Consider a complete probability space $\left( \Omega,\mathscr{F},\mathbb{P}\right)$. Fix $T>0$ and $s\in [0,T[$. Let $\left \{ \mathscr{F}^s_t \right \}_{t\geq s}$ be a filtration satisfying the usual conditions.  Each time we use expressions as ``adapted'', ``martingale'', etc... we always mean ``with respect to the filtration $\left \{ \mathscr{F}^s_t \right \}_{t\geq s}$''.

Given a metric space $S$ we denote by $\mathscr{B}(S)$ the Borel $\sigma$-field on $S$. Consider two real Hilbert spaces $H$ and $G$. By default we assume that all the processes $\X \colon [s,T]\times \Omega \to H$ are
Bochner  measurable functions  with respect to the product $\sigma$-algebra
 $\mathscr{B}([s,T]) \otimes \mathscr{F}$ with values in $(H, \mathscr{B}(H))$.
Continuous processes are clearly Bochner measurable processes.
 Similar conventions are done  for $G$-valued processes. We denote by $H\hat\otimes_\pi G$ the projective tensor  product of $H$ and $G$, see \cite{Ryan02} for details.

\begin{Definition}
\label{def:Dirweak}
A continuous real process $X \colon [s,T]\times \Omega \to \mathbb{R}$ is called
 {\bf weak Dirichlet process} if it can be written as $X=M+A$, where $M$ is a continuous local martingale and $A$ is a {\bf martingale orthogonal process} 
 in the sense  that $A(s)=0$ and $\left[ A,N\right] =0$ for every continuous local martingale $N$.
\end{Definition}
The following result is proved in Remarks 3.5 and 3.2 of \cite{GozziRusso06}.
\begin{Theorem}
\label{th:uniqueness-weak-Dirichlet}
\begin{enumerate}
\item The decomposition described in Definition \ref{def:Dirweak} is unique.
\item A semimartingale is a weak Dirichlet process.
\end{enumerate}
\end{Theorem}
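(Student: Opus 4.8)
The plan is to handle the two assertions separately, reducing each to elementary properties of the covariation bracket $[\cdot,\cdot]$ — namely its bilinearity, the characterization of continuous local martingales with vanishing quadratic variation, and the fact that the bracket of a finite variation continuous process against any continuous process is zero.

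For the uniqueness in item 1, I would argue by difference. Suppose $X = M_1 + A_1 = M_2 + A_2$ are two decompositions in the sense of Definition \ref{def:Dirweak}, and set $N := M_1 - M_2 = A_2 - A_1$. Then $N$ is simultaneously a continuous local martingale (as a difference of two such) and a martingale orthogonal process: indeed $N(s) = A_2(s) - A_1(s) = 0$, while bilinearity of the covariation gives $[N,L] = [A_2,L] - [A_1,L] = 0$ for every continuous local martingale $L$. The crucial device is to apply this orthogonality to the particular choice $L = N$, which forces $[N,N] = 0$. A continuous local martingale with null initial value and vanishing quadratic variation is indistinguishable from zero (by localization, $N^2$ is a nonnegative local martingale starting at $0$, hence a supermartingale, so $N \equiv 0$). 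Consequently $M_1 = M_2$ and $A_1 = A_2$.

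For item 2, let $X$ be a continuous semimartingale with canonical decomposition $X = X(s) + \tilde M + V$, where $\tilde M$ is a continuous local martingale with $\tilde M(s) = 0$ and $V$ is a continuous adapted finite variation process with $V(s) = 0$. I would set $M := X(s) + \tilde M$, which is a continuous local martingale since the constant-in-time, $\mathscr{F}_s$-measurable process $t \mapsto X(s)$ is itself a local martingale, and $A := V$. Then $X = M + A$ and $A(s) = 0$. The only point to verify is the orthogonality of $A$: since $V$ has finite variation and every continuous local martingale $N$ is continuous, the covariation $[V,N]$ vanishes, by the standard fact that the bracket of a continuous finite variation process against any continuous process is zero. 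Hence $A = V$ is a martingale orthogonal process, and $X$ is a weak Dirichlet process.

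I expect the main obstacle to be entirely concentrated in the uniqueness step, and more precisely in the self-application $L = N$ of the orthogonality property: this is the non-obvious move that collapses the two candidate decompositions, and everything else follows from classical arguments. One should also take care that the passage from $[N,N]=0$ to $N \equiv 0$ is carried out with an appropriate localizing sequence, and that the covariation bracket is read throughout in the sense employed in the paper; both are routine but deserve to be stated explicitly.
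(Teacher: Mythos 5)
Your proof is correct and follows essentially the same route as the argument the paper relies on (it cites Remarks 3.5 and 3.2 of \cite{GozziRusso06} rather than reproving the statement): uniqueness by testing the difference $N=M_1-M_2=A_2-A_1$ against itself to get $[N,N]=0$ and hence $N\equiv 0$, and the semimartingale case by noting that a continuous bounded variation process has zero covariation with every continuous (local martingale) process, exactly as in the paper's own infinite-dimensional analogue, Lemma \ref{lm:boundedvar-nu-orthogonal}. No gaps to report.
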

The notion of weak Dirichlet process constitutes a natural generalization 
of the one of semimartingale. To figure out this fact one can start by considering a real continuous semimartingale $S = M + V$,  where $M$ is a local martingale and $V$ is a bounded variation
process vanishing at zero. Given a function $f: [0,T] \times \R \rightarrow \R$
of class $C^{1,2}$, It\^o formula shows that 
\begin{equation} \label{EMF}
f(\cdot, S) = M^f + A^f
\end{equation}
is a semimartingale where $M^f_t = f(0,S_0) + \int_0^t \partial_x f(r,S_r) dM_r$
is a local martingale and $A^f$ is a bounded variation process expressed
in  terms of the partial derivatives of $f$.
If $f \in C^{0,1}$ then \eqref{EMF} still holds with the same $M^f$, but now $A^f$ is only a martingale orthogonal process; in this case $f(\cdot, S)$ is generally no longer a semimartingale
but only a weak Dirichlet process, see \cite{GozziRusso06}, Corollary 3.11.
 For this
reason \eqref{EMF} can be interpreted as a generalized It\^o formula.

Another aspect to be emphasized is that  a semimartingale is also a finite quadratic variation process.
Some authors, see e.g. \cite{MetivierPellaumail80, Dinculeanu00} have extended the notion
of quadratic variation to the case of stochastic process taking values in
a Hilbert (or even Banach) space $B$. 
The difficulty is that 
%the neither the concept of semimartingale
%nor the one of
the notion of  finite quadratic variation process (but also the one 
of semimartingale or weak Dirichlet process) is  not suitable 
in several contexts and in particular in the analysis of mild solutions of an evolution equations that cannot be expected to be in general neither a semimartingale nor a finite quadratic variation process.
A way to remain in this spirit is to introduce a notion of quadratic
variation which is associated with a space (called Chi-subspace) $\chi$
of the dual of the tensor product  $B \hat \otimes_\pi B$. In the rare cases
when the process has indeed a finite  quadratic variation then
the corresponding $\chi$ would be allowed to be the full space
$(B \hat \otimes_\pi B)^*$.

We recall that, following \cite{DiGirolamiRusso09,  DiGirolamiRusso11}, a \emph{Chi-subspace} (of $(H\hat\otimes_\pi G)^*$) is defined as any Banach subspace $(\chi, |\cdot|_\chi)$ which is continuously embedded into $(H\hat\otimes_\pi G)^*$ and, following \cite{FabbriRusso16}, given a Chi-subspace $\chi$ we introduce the notion of $\chi$-covariation as follows.
\begin{Definition}
\label{def:covariation} 
Given two process $\X\colon [s,T] \to H$ and $\X\colon [s,T] \to G$, we say that $(\X, \Y)$ admits a 
{\bf $\chi$-covariation} if the two following conditions are satisfied.
\begin{description}
\item[H1] For any sequence of positive real numbers $\epsilon_{n}\searrow 0$ there exists a subsequence $\epsilon_{n_{k}}$ such that 
\begin{equation} \label{FDefC}
\begin{split}
&\sup_{k}\int_{s}^{T} \frac{\left | J(\X({r+\epsilon_{n_{k}}})-\X({r}))
\otimes (\Y({r+\epsilon_{n_{k}}})-\Y({r}))\right |_{\chi^{\ast}} }{\epsilon_{n_{k}}} dr
\;< \infty\; a.s., 
\end{split}
\end{equation}
where $ J: H\hat{\otimes}_{\pi}G \longrightarrow (H\hat{\otimes}_{\pi}G)^{\ast\ast}$ is the canonical injection between a space and its bidual.

\item[H2]
If we denote by $[\X,\Y]_\chi^{\epsilon}$ the application
\begin{equation}
\label{eq:def-chi-epsilon}
\left \{
\begin{array}{l}
[\X,\Y]_\chi^{\epsilon}:\chi\longrightarrow \mathcal{C}([s,T])\\[5pt]
\displaystyle
\phi \mapsto
\int_{s}^{\cdot} \tensor[_{\chi}]{\left\langle \phi,
\frac{J\left(  \left(\X({r+\epsilon})-\X({r})\right)\otimes \left(\Y({r+\epsilon})-\Y({r})\right)  \right)}{\epsilon} 
\right\rangle}{_{\chi^{\ast}}} dr, 
\end{array}
\right .
\end{equation}
the following two properties hold.
\begin{description}
\item{(i)} There exists an application, denoted by $[\X,\Y]_\chi$, defined on $\chi$ with values in $\mathcal{C}([s,T])$, 
satisfying\footnote{Given a separable Banach space $B$ and a probability space $(\Omega, \mathbb{P})$ a family of processes $\Z^\epsilon\colon \Omega \times [0, T] \to B$ is said to converge in the ucp (uniform convergence on probability) sense to $\Z\colon \Omega \times [0, T] \to B$, when $\epsilon$ goes to zero,
if $\lim_{\epsilon \to 0}  \sup_{t\in [0,T]} |\Z^\epsilon_t - \Z_t|_B = 0$ in probability i.e. if, for any $\gamma>0$, $\lim_{\epsilon \to 0} \mathbb{P} \left (\sup_{t\in [0,T]} |\Z^\epsilon_t - \Z_t|_B >\gamma \right ) = 0$.}
\begin{equation}
[\X,\Y]_\chi^{\epsilon}(\phi)\xrightarrow[\epsilon\longrightarrow 0_{+}]{ucp} [\X,\Y]_\chi(\phi), 
\end{equation} 
for every $\phi \in \chi\subset
(H\hat{\otimes}_{\pi}G)^{\ast}$.
\item{(ii)} 
There exists a Bochner measurable process
 $\widetilde{[\X,\Y]}_\chi:\Omega\times [s,T]\longrightarrow \chi^{\ast}$, 
such that
\begin{itemize}
\item for almost all 
$\omega \in \Omega$, $\widetilde{[\X,\Y]}_\chi(\omega,\cdot)$ is a (c\`adl\`ag) bounded variation process, 
\item 
$\widetilde{[\X,\Y]}_\chi(\cdot,t)(\phi)=[\X,\Y]_\chi(\phi)(\cdot,t)$ a.s. for all $\phi\in \chi$, $t\in [s,T]$.
\end{itemize}
\end{description}
\end{description}
\end{Definition}
If $(\X,\Y)$ admits a {\bf $\chi$-covariation} 
we call  $\widetilde{[\X,\Y]}$ $\chi$-covariation of $(\X,\Y)$.
If  $\widetilde{[\X,\Y]}$ vanishes
we also write  that $[\X,Y]_\chi = 0$.
We say that a process $\X$ admits a {\bf $\chi$-quadratic variation} if $(\X, \X)$
 admits a $\chi$-covariation. In that case  $\widetilde{[\X,\X]}$
is called $\chi$-quadratic variation of $\X$. 

% \begin{Definition}
% \label{def:per-nota}
% We say that a  process $\X\colon [s,T]\times \Omega \to H$, a.s. square integrable, admits a scalar quadratic variation if, for any $t\in [s,T]$, the limit, for $\epsilon\searrow 0$ of 
% \[
% [\X,\X]^{\epsilon, \mathbb{R}}(t):= \int_{s}^{t} \frac{\left 
% |\X({r+\epsilon})-\X({r}) \right |^2_B}{\epsilon} dr
% \]
% exists in probability and it admits a continuous version. The limit process is called scalar quadratic variation of $\X$ and it is denoted  by  $[\X,\X]^{\mathbb{R}}$. 
% \end{Definition}

\begin{Definition} \label{DNuDir}
Let $H$ and $G$ be two separable Hilbert spaces.
Let $\nu\subseteq (H\hat\otimes_{\pi} G)^*$ be a Chi-subspace. 
A continuous adapted $H$-valued process $\A \colon [s,T] \times \Omega \to H$ is said to be 
{\bf $\nu$-martingale-orthogonal} if  $[ \A, \N ]_\nu=0$, for any $G$-valued continuous local martingale $\N$.
\end{Definition}
\begin{Lemma}
\label{lm:boundedvar-nu-orthogonal}
Let $H$ and $G$ be two separable Hilbert spaces, $\V\colon [s,T] \times \Omega \to H$ a bounded variation process. \\
%Then the two items below hold.
%\begin{enumerate}
%\item Given any continuous process $\Z\colon [s,T] \times \Omega \to G$  and any Chi-subspace $\nu\subseteq (H\hat\otimes_{\pi} G)^*$, we have $[\V, \Z]_\nu = 0$. 
%\item In particular, 
For any any Chi-subspace $\nu\subseteq (H\hat\otimes_{\pi} G)^*$, $\V$ is $\nu$-martingale-orthogonal.
%\end{enumerate}
\end{Lemma}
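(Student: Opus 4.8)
The plan is to verify directly, from Definition \ref{def:covariation} applied with $\chi=\nu$, $\X=\V$ and $\Y=\N$, that the approximating integrals defining the $\nu$-covariation of $(\V,\N)$ converge to zero for every $G$-valued continuous local martingale $\N$. Establishing convergence to zero (rather than merely existence of a limit) settles H1, H2(i) and H2(ii) simultaneously, since the candidate $\nu$-covariation $\widetilde{[\V,\N]}_\nu$ is then identically $0$, which is trivially a bounded variation process.

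First I would estimate the integrand pointwise in the $\nu^\ast$-norm. Since $\nu$ embeds continuously into $(H\hat\otimes_\pi G)^\ast$, the adjoint inclusion makes $(H\hat\otimes_\pi G)^{\ast\ast}$ embed continuously into $\nu^\ast$, say with constant $C$. As the canonical injection $J$ is isometric and the projective norm of an elementary tensor factorizes, this gives, for $a\in H$ and $b\in G$,
\[
|J(a\otimes b)|_{\nu^\ast}\le C\,|J(a\otimes b)|_{(H\hat\otimes_\pi G)^{\ast\ast}}=C\,|a\otimes b|_{H\hat\otimes_\pi G}=C\,|a|_H\,|b|_G.
\]
Taking $a=\V(r+\epsilon)-\V(r)$ and $b=\N(r+\epsilon)-\N(r)$ reduces everything to controlling the scalar product of the increment norms, and Cauchy--Schwarz (via the pairing bound $|\tensor[_\nu]{\langle\phi,\cdot\rangle}{_{\nu^\ast}}|\le|\phi|_\nu\,|\cdot|_{\nu^\ast}$) transfers this bound to the functionals in \eqref{eq:def-chi-epsilon}.

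Next I would decouple the two factors. Being continuous on the compact interval $[s,T]$ (with the usual convention of extending the processes constantly past $T$), $\N$ is a.s.\ uniformly continuous, so its modulus of continuity $\omega_\N(\epsilon):=\sup_{|u-v|\le\epsilon}|\N(u)-\N(v)|_G$ tends to $0$ a.s.\ as $\epsilon\to0$. For the bounded variation factor, writing $\|\V\|$ for the pathwise total variation measure and using $|\V(r+\epsilon)-\V(r)|_H\le\|\V\|((r,r+\epsilon])$, Tonelli's theorem yields
\[
\int_s^T\frac{|\V(r+\epsilon)-\V(r)|_H}{\epsilon}\,dr\le\frac1\epsilon\int\Big(\int_s^T\mathbf 1_{[u-\epsilon,u)}(r)\,dr\Big)\,d\|\V\|(u)\le\|\V\|([s,T]),
\]
which is a.s.\ finite. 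Combining the three estimates, for every $\epsilon$,
\[
\int_s^T\frac{\big|J\big((\V(r+\epsilon)-\V(r))\otimes(\N(r+\epsilon)-\N(r))\big)\big|_{\nu^\ast}}{\epsilon}\,dr\le C\,\omega_\N(\epsilon)\,\|\V\|([s,T])\xrightarrow[\epsilon\to0]{}0\quad\text{a.s.}
\]
Since the right-hand side is a.s.\ bounded along any sequence $\epsilon_n\searrow0$, H1 holds with the full sequence as subsequence; and since $\big|[\V,\N]_\nu^\epsilon(\phi)(t)\big|\le|\phi|_\nu\,C\,\omega_\N(\epsilon)\,\|\V\|([s,T])$ uniformly in $t\in[s,T]$, we get $[\V,\N]_\nu^\epsilon(\phi)\to0$ ucp for each $\phi\in\nu$, i.e.\ $[\V,\N]_\nu(\phi)=0$, while $\widetilde{[\V,\N]}_\nu\equiv0$ realizes H2(ii). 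As $\N$ was an arbitrary $G$-valued continuous local martingale, $\V$ is $\nu$-martingale-orthogonal.

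I do not expect a genuine obstacle here: once the two ingredients are isolated the conclusion is essentially a one-line estimate, and in fact only the continuity (not the martingale property) of $\N$ is used. The only points demanding care are transporting the estimate to the $\nu^\ast$-norm through $J$ and the dual of the continuous embedding, and the Tonelli computation bounding the $\epsilon$-rescaled $\V$-increments by the total variation; the constant extension past $T$ merely keeps all increments well defined.
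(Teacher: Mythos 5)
Your proof is correct and follows essentially the same route as the paper: the identical chain of estimates (continuous embedding constant, isometry of $J$, factorization of the projective norm on elementary tensors, modulus of continuity of the martingale times the total variation of $\V$ via Fubini/Tonelli), and like the paper you only use continuity of $\N$. The only cosmetic difference is that the paper invokes Lemma 3.2 of \cite{FabbriRusso16} to reduce to showing the $\sup$-integral tends to $0$ in probability, whereas you verify conditions H1 and H2 of Definition \ref{def:covariation} directly.
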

\begin{proof}
We will prove that, given any continuous process $\Z\colon [s,T] \times \Omega \to G$  and any Chi-subspace $\nu\subseteq (H\hat\otimes_{\pi} G)^*$, we have $[\V, \Z]_\nu = 0$. This will hold in particular if $\Z$ 
is a continuous local martingale.

By Lemma 3.2 of \cite{FabbriRusso16} it is enough to show that
\[
A(\varepsilon) := \int_s^T \sup_{
\begin{subarray}{c}\Phi\in\nu, \\ \|\Phi\|_\nu\leq 1\end{subarray}
}
\left | \left\langle J \Big ( (\V(t+\varepsilon) - \V(t)) \otimes (\Z(t+\varepsilon) - \Z(t)) \Big ), \Phi \right\rangle \right | \ud t \xrightarrow{\varepsilon \to 0} 0
\]
in probability (the processes are extended on $ ]T,T+\varepsilon]$ by defining, for instance, $\Z(t)=\Z(T)$ for any $t\in ]T,T+\varepsilon]$). Now,
 since $\nu$ is continuously embedded in $(H\hat\otimes_\pi G)^*$, there exists a constant $C$ such that $\|\cdot \|_{(H\hat\otimes_\pi G)^*} \leq C \| \cdot \|_\nu$ so that
\begin{multline}
\label{eq:dinuovolemma}
A(\varepsilon) \leq C \int_s^T \sup_{
\begin{subarray}{c}\Phi\in\nu, \\ \|\Phi\|_{(H\hat\otimes_\pi G)^*}\leq 1\end{subarray}} 
\left | \left\langle J \Big ( (\V(t+\varepsilon) - \V(t)) \otimes (\Z(t+\varepsilon) - \Z(t)) \Big ), \Phi \right\rangle \right | \ud t\\
\leq 
C \int_s^T \left \| J \Big ( (\V(t+\varepsilon) - \V(t)) \otimes (\Z(t+\varepsilon) - \Z(t)) \Big ) \right \|_{(H\hat\otimes_\pi G)^{**}} \ud t \\
= C \int_s^T \left \| \Big ( (\V(t+\varepsilon) - \V(t)) \otimes (\Z(t+\varepsilon) - \Z(t)) \Big ) \right \|_{(H\hat\otimes_\pi G)} \ud t\\
= C \int_s^T \left \|  (\V(t+\varepsilon) - \V(t)) \right \|_H \left \| (\Z(t+\varepsilon) - \Z(t)) \right \|_{G} \ud t,
\end{multline}
where the last step follows by Proposition 2.1 page 16 of \cite{Ryan02}. Now, denoting $t\mapsto |||{\Y}|||(t)$ the real total variation function of an $H$-valued bounded variation function $\Y$ defined on the interval $[s,T]$ we get
\[
\| \Y(t+\varepsilon) - \Y(t) \| = \left \| \int_t^{t+\varepsilon} \ud Y(r) \right \| \leq  \int_t^{t+\varepsilon)} \ud  |||Y||| (r).
\]
So, by using Fubini's theorem in (\ref{eq:dinuovolemma}), 
\[
A(\varepsilon) \leq C \delta(\Z; \varepsilon) \int_s^{T+\varepsilon} \ud |||\V|||(r),
\]
where $\delta(\Z; \varepsilon)$ is the modulus of continuity of $\Z$. Finally this converges to zero almost surely and then in probability.
\end{proof}

\begin{Definition}
\label{def:chi-weak-Dirichlet-process}
Let $H$ and $G$ be two separable Hilbert spaces.
Let $\nu\subseteq (H\hat\otimes_{\pi} G)^*$ be a Chi-subspace. 
A continuous $H$-valued process $\X \colon [s,T] \times \Omega \to H$ is called {\bf $\nu$-weak-Dirichlet process} if it is adapted and there exists a decomposition $\X = \M +\A$ where
\begin{itemize}
 \item[(i)] $\M$ is  an  $H$-valued continuous local martingale,
 \item[(ii)] $\A$ is an $\nu$-martingale-orthogonal process with $\A(s)=0$.
\end{itemize}
\end{Definition}

The theorem below was the object of Theorem 3.19 of \cite{FabbriRusso16}:
it extended Corollary 3.11 in \cite{GozziRusso06}.
\begin{Theorem}
\label{th:prop6}
Let $\nu_0$ be a Banach subspace continuously embedded in $H$. Define $\nu:= \nu_0\hat\otimes_\pi\mathbb{R}$ and $\chi:=\nu_0\hat\otimes_\pi\nu_0$. Let $F\colon [s,T] \times H \to \mathbb{R}$  be a $C^{0,1}$-function.  Denote with $\partial_x F$ the Fr\'echet derivative of $F$ with respect to $x$ and assume that the mapping $(t,x) \mapsto \partial_xF(t,x)$ is continuous from $[s,T]\times H$ to $\nu_0$.  Let $ \X(t) = \M(t) + \A(t)$ for $t\in [s,T]$ be an $\nu$-weak-Dirichlet process with finite $\chi$-quadratic variation. Then $Y(t):= F(t, \X(t))$ is a real weak Dirichlet process
%\footnote{This means that $X$ can be written as the sum of two processes, $M$ and $A$ where $M$ is a local martingale and $A$ is a process such that $[A, N] = 0$ for every continuous local martingale satisfying $A(s) = 0$.}
with local martingale part
\[
R(t) = F(s, \X(s)) + \int_s^t \left\langle \partial_xF(r,\X(r)), \ud \M(r) \right\rangle, \qquad t\in [s,T].
\]
\end{Theorem}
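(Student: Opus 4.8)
The plan is to establish the decomposition $Y=R+C$ by proving that the remainder $C(t):=Y(t)-R(t)$ is a martingale-orthogonal process; by the uniqueness part of Theorem \ref{th:uniqueness-weak-Dirichlet} this identifies $R$ as the local martingale part and finishes the proof. First I would check that $R$ is a well-defined continuous local martingale: since $(t,x)\mapsto\partial_xF(t,x)$ is continuous with values in $\nu_0$, which is continuously embedded in $H$, and $X$ is continuous, the integrand $r\mapsto\partial_xF(r,X(r))$ is a continuous $H$-valued adapted process, so $\int_s^\cdot\langle\partial_xF(r,X(r)),\ud M(r)\rangle$ makes sense against the $H$-valued local martingale $M$. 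As $C$ is continuous, adapted and $C(s)=0$, it remains to show $[C,N]=0$ for every continuous real local martingale $N$; after the usual localization I may assume $N$ square-integrable and the trajectories of $X,M,A$ and $\partial_xF(\cdot,X(\cdot))$ bounded.

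Fixing such an $N$, I would compute the covariation from the ucp limit of $\frac1\epsilon\int_s^\cdot(C(r+\epsilon)-C(r))(N(r+\epsilon)-N(r))\,\ud r$. Writing $\Delta_\epsilon X(r)=X(r+\epsilon)-X(r)$ and similarly for $M,A,N$, I split the increment of $F$ as
\[
F(r+\epsilon,X(r+\epsilon))-F(r,X(r))=\big[F(r+\epsilon,X(r+\epsilon))-F(r+\epsilon,X(r))\big]+\big[F(r+\epsilon,X(r))-F(r,X(r))\big],
\]
and expand the first (space) bracket by the fundamental theorem of calculus as $\langle\partial_xF(r,X(r)),\Delta_\epsilon X(r)\rangle+\rho(r,\epsilon)$, where $\rho(r,\epsilon)=\int_0^1\langle\partial_xF(r+\epsilon,X(r)+a\Delta_\epsilon X(r))-\partial_xF(r,X(r)),\Delta_\epsilon X(r)\rangle\,\ud a$. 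Using $X=M+A$ and subtracting the $R$-increment $\int_r^{r+\epsilon}\langle\partial_xF(u,X(u)),\ud M(u)\rangle$, the integrand $C(r+\epsilon)-C(r)$ becomes a sum of four pieces: a frozen martingale pairing $\langle\partial_xF(r,X(r)),\Delta_\epsilon M(r)\rangle$ minus the $R$-increment, a frozen $A$-pairing $\langle\partial_xF(r,X(r)),\Delta_\epsilon A(r)\rangle$, the pure time-increment $F(r+\epsilon,X(r))-F(r,X(r))$, and the error $\rho(r,\epsilon)$.

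Three of these four are routine. The frozen martingale pairing, once multiplied by $\Delta_\epsilon N(r)$ and averaged, converges to $\int_s^\cdot\langle\partial_xF(r,X(r)),\ud[M,N](r)\rangle$, with $[M,N]$ the $H$-valued covariation of $M$ and $N$; this is exactly the contribution of the $R$-increment, so the two cancel. The frozen $A$-pairing vanishes: since $\partial_xF(r,X(r))\in\nu_0$, the element $\partial_xF(r,X(r))\otimes1$ lies in $\nu=\nu_0\hat\otimes_\pi\mathbb{R}$ and depends continuously on $r$, while $A$ is $\nu$-martingale-orthogonal and $N$ is a real local martingale, so $[A,N]_\nu=0$; testing the vanishing $\nu^\ast$-valued covariation against the continuous $\nu$-valued integrand $\partial_xF(\cdot,X(\cdot))\otimes1$, as in the proof of Lemma \ref{lm:boundedvar-nu-orthogonal} and Lemma 3.2 of \cite{FabbriRusso16}, gives zero. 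The error $\rho$ is controlled by combining the uniform continuity of $\partial_xF$ as a $\nu_0$-valued map on the (a.s.\ compact) graph of $X$ with the fact that $(X,N)$ admits a $\nu$-covariation, a consequence of $X=M+A$, of the existence of $[M,N]$, and of $[A,N]_\nu=0$; the finite $\chi$-quadratic variation hypothesis ($\chi=\nu_0\hat\otimes_\pi\nu_0$) is what guarantees, within the framework of \cite{FabbriRusso16}, that these limiting covariations are well defined, and the uniform smallness of the derivative increments then forces this term to $0$.

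The genuinely delicate point — the main obstacle — is the pure time-increment term $\frac1\epsilon\int_s^\cdot\big[F(r+\epsilon,X(r))-F(r,X(r))\big]\Delta_\epsilon N(r)\,\ud r$, for which no regularity of $F$ in time is available, so that Cauchy–Schwarz fails because of the $1/\epsilon$ factor. The key observations are that $G^\epsilon(r):=F(r+\epsilon,X(r))-F(r,X(r))$ is $\mathscr{F}^s_r$-measurable — it is a deterministic time-shifted function evaluated at the adapted $X(r)$ — and that $\sup_{r\in[s,T]}|G^\epsilon(r)|\to0$ a.s.\ by uniform continuity of $F$ on $[s,T]\times\{X(r):r\in[s,T]\}$. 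A stochastic Fubini argument then rewrites the average as an It\^o integral $\int_s^\cdot H^\epsilon(u)\,\ud N(u)$ with adapted integrand $H^\epsilon(u)=\frac1\epsilon\int_{(u-\epsilon)\vee s}^{u}G^\epsilon(r)\,\ud r$ satisfying $\|H^\epsilon\|_\infty\le\|G^\epsilon\|_\infty\to0$, and the It\^o isometry yields $L^2$, hence ucp, convergence to $0$. This is exactly where the martingale property of $N$ compensates the absence of time-differentiability of $F$. Collecting the four contributions gives $[C,N]=0$ for every continuous local martingale $N$, so $C$ is martingale-orthogonal and $R$ is the local martingale part of $Y$, as claimed.
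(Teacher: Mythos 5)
The paper does not prove this statement itself --- it simply invokes Theorem 3.19 of \cite{FabbriRusso16} --- and your argument is a correct reconstruction of the proof given there: the same four-way splitting of the increment of $C=Y-R$, with the frozen martingale term cancelling against the $R$-increment, the $\A$-term killed by $\nu$-martingale-orthogonality tested against the continuous $\nu$-valued integrand, the Taylor remainder controlled through the uniform continuity of $\partial_x F$ into $\nu_0$ together with the covariation bounds that the finite $\chi$-quadratic variation provides via Cauchy--Schwarz, and the time-increment term handled by stochastic Fubini and the It\^o isometry. So the proposal is correct and takes essentially the same approach as the cited proof.
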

%\begin{proof}
%See Theorem 3.19 of \cite{FabbriRusso16}
%\end{proof}

\section{The setting of the problem and HJB equation}
\label{sec:HJB}
In this section we introduce a class of infinite dimensional optimal control problems and we prove a 
decomposition result for the strong solutions of the related Hamilton-Jacobi-Bellman equation. We refer the reader to \cite{Yosida80} and \cite{DaPratoZabczyk14} respectively for the classical notions of functional analysis and stochastic calculus in infinite dimension we use.

\subsection{The optimal control problem}
Assume from now that $H$ and $U$ are real separable Hilbert spaces, $Q \in \mathcal{L}(U)$, $U_0:=Q^{1/2} (U)$. Assume that  $\W_Q=\{\W_Q(t):s\leq t\leq T\}$  is an $U$-valued $\mathscr{F}^t_s$-$Q$-Wiener process (with $\W_Q(s)=0$, $\mathbb{P}$ a.s.) and denote by  $\mathcal{L}_2(U_0, H)$ the Hilbert space of the Hilbert-Schmidt operators from $U_0$ to $H$.

We denote  by  $A\colon D(A) \subseteq H \to H$ the generator of the $C_0$-semigroup $e^{tA}$ (for $t\geq 0$) on $H$. $A^*$ denotes the adjoint of $A$. Recall that $D(A)$ and $D(A^*)$ are Banach spaces when endowed with the graph norm. Let $\Lambda$ be a Polish space.

We formulate the following standard assumptions that will be needed to ensure the existence and the uniqueness of the solution of the state equation. 
\begin{Hypothesis}
\label{hp:onbandsigma}
$b\colon [0,T] \times  H \times \Lambda \to H$ is a continuous function and satisfies, for some $C>0$,
\[
\begin{array}{l}
|b(s,x,a) - b(s,y,a)| \leq C |x-y|, \\[3pt]
|b(s,x,a)| \leq C (1+|x|),
\end{array}
\]
for all $x,y \in H$, $s\in [0,T]$, $a\in\Lambda$. $\sigma\colon [0,T]\times H \to \mathcal{L}_2(U_0, H)$ is continuous and, for some $C>0$, satisfies,
\[
\begin{array}{l}
\|\sigma(s,x) - \sigma(s,y)\|_{\mathcal{L}_2(U_0, H)} \leq C |x-y|, \\[3pt]
\|\sigma(s,x)\|_{\mathcal{L}_2(U_0, H)} \leq C (1+|x|),
\end{array}
\]
for all $x,y \in H$, $s\in [0,T]$.% (being $\| \cdot \|_{\mathcal{L}_2(U_0, H)}$ the Hilbert-Schmidt norm).
\end{Hypothesis}

\medskip

Given an adapted process $a = a(\cdot): [s,T] \times \Omega \rightarrow \Lambda$, we consider the  state equation
\begin{equation}
\label{eq:state}
\left \{
\begin{array}{l}
\ud \X(t) = \left ( A\X(t)+ b(t,\X(t),a(t)) \right ) \ud t + \sigma(t,\X(t)) \ud \W_Q(t)\\[5pt]
\X(s)=x.
\end{array}
\right.
\end{equation}

The solution of (\ref{eq:state}) is understood in the mild sense: an $H$-valued adapted process $\X(\cdot)$ is a solution if
\[
\mathbb{P} \left \{ \int_s^T \left (|\X(r)| + | b(r,\X(r), a(r))| + \| \sigma(r,\X(r))\|_{\mathcal{L}_2(U_0, H)}^2\right) \ud r <+\infty  \right \} = 1
\]
and
\begin{equation}
\label{eq:state-mild}
\X(t) =  e^{(t-s)A}x + \int_s^t e^{(t-r)A} b(r,\X(r),a(r)) \ud r
 + \int_s^t e^{(t-r)A} \sigma(r,\X(r)) \ud \W_Q(r)
\end{equation}
 $\mathbb{P}$-a.s. for every  $ t \in [s,T]$.
Thanks to Theorem 3.3 of \cite{GawareckiMandrekar10},  given Hypothesis \ref{hp:onbandsigma}, there exists a unique (up to modifications) continuous (mild) solution $\X(\cdot; s,x, a(\cdot))$ of (\ref{eq:state}).

\begin{Proposition}
\label{cor:X-barchi-Dirichlet}
Set $\bar \nu_0 = D(A^*)$, $\nu = \bar \nu_0 \hat\otimes_\pi \mathbb{R}$, $\bar \chi = \bar \nu_0 \hat \otimes_\pi \bar \nu_0.$ 
The process $\X(\cdot; s,x, a(\cdot))$ is $\nu$-weak-Dirichlet process admitting a $\bar \chi$-quadratic variation with decomposition $\M + \A$ where $\M$  is  the local martingale defined by $\M(t) = x + \int_s^t \sigma (r, \X(r)) \ud \W_Q(r)$ and $\A$ is a $\nu$-martingale-orthogonal process.
\end{Proposition}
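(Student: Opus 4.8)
The plan is to read the decomposition directly off the mild formula (\ref{eq:state-mild}) and to recognise $\X$ as a convolution type process, so that the statement becomes an instance of the results on such processes established in \cite{FabbriRusso16}. Splitting $e^{(t-r)A}\sigma = \sigma + (e^{(t-r)A}-I)\sigma$ inside the stochastic convolution, I would take as candidate martingale part $\M(t) = x + \int_s^t \sigma(r,\X(r))\,\ud\W_Q(r)$ and set $\A := \X-\M$, which from (\ref{eq:state-mild}) reads
\begin{align*}
\A(t) = (e^{(t-s)A}-I)x &+ \int_s^t e^{(t-r)A} b(r,\X(r),a(r))\,\ud r \\
&+ \int_s^t (e^{(t-r)A}-I)\sigma(r,\X(r))\,\ud\W_Q(r),
\end{align*}
so that $\A(s)=0$ by construction. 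First I would check that $\M$ is a genuine $H$-valued continuous local martingale: the integrability condition entering the definition of the mild solution, together with Hypothesis \ref{hp:onbandsigma}, guarantees that $r\mapsto\sigma(r,\X(r))$ is square integrable with values in $\mathcal{L}_2(U_0,H)$ along the trajectory $\X$, so the It\^o integral against the $Q$-Wiener process is well defined and continuous.

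The core of the argument is to show that $\A$ is $\nu$-martingale-orthogonal, i.e. that $[\A,\N]_\nu=0$ for every real continuous local martingale $\N$, in the sense of Definition \ref{def:covariation} with $G=\R$ and $\nu=\bar\nu_0\hat\otimes_\pi\R\cong\bar\nu_0=D(A^*)$. A test element of $\nu$ being identified with some $\phi\in D(A^*)$, the quantity to analyse is $(\N(r+\epsilon)-\N(r))\,\langle\phi,\A(r+\epsilon)-\A(r)\rangle$, and the decisive ingredient is the identity $\langle\phi, e^{\tau A} h\rangle = \langle e^{\tau A^*}\phi, h\rangle$ together with the regularising bound $\|(e^{\tau A^*}-I)\phi\|_H \le \tau\,\sup_{0\le u\le\tau}\|A^* e^{uA^*}\phi\|_H$, which is of order $\tau$ precisely because $\phi\in D(A^*)$. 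I would then treat the three pieces of $\A$ separately. Paired with $\phi$, the deterministic term $(e^{(t-s)A}-I)x$ and the drift convolution become real processes with Lipschitz, hence bounded variation, trajectories (their time derivatives being $\langle A^*e^{(t-s)A^*}\phi,x\rangle$ and $\langle\phi,b(t,\X(t),a(t))\rangle+\int_s^t\langle A^*e^{(t-r)A^*}\phi,b(r,\X(r),a(r))\rangle\,\ud r$), so their scalar covariation with the martingale $\N$ vanishes. For the stochastic convolution remainder the factor $(e^{(t-r)A^*}-I)\phi$ of order $(t-r)$ makes the covariation expressions in (\ref{eq:def-chi-epsilon}) vanish in the ucp limit and supplies the uniform bound demanded by condition H1; this is exactly the mechanism through which convolution type processes are proved to be $\nu$-weak-Dirichlet in \cite{FabbriRusso16}, whose quantitative estimates I would invoke.

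For the $\bar\chi$-quadratic variation with $\bar\chi=D(A^*)\hat\otimes_\pi D(A^*)$, the same splitting shows that the only surviving contribution comes from the genuine martingale $\int_s^\cdot\sigma\,\ud\W_Q$: tested against $\phi\otimes\psi$ with $\phi,\psi\in D(A^*)$, the cross terms and the remainder terms all carry at least one factor $(e^{\tau A^*}-I)\phi$ or $(e^{\tau A^*}-I)\psi$ of order $\tau$ and therefore do not contribute to the limit, while the martingale part has a finite $\bar\chi$-quadratic variation computable from $\sigma\sigma^*$. Hence $\X$ admits a $\bar\chi$-quadratic variation and, by Definition \ref{def:chi-weak-Dirichlet-process}, is a $\nu$-weak-Dirichlet process with the announced decomposition. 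I expect the main obstacle to be precisely the stochastic convolution remainder: being neither a martingale nor an $H$-bounded variation process, it is not covered by Lemma \ref{lm:boundedvar-nu-orthogonal}, so one must genuinely exploit the $D(A^*)$-regularisation of the semigroup and verify the uniform integrability condition H1 --- the technical heart supplied by the convolution results of \cite{FabbriRusso16}.
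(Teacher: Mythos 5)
Your proposal is correct and follows essentially the same route as the paper, whose proof consists entirely of the citation ``See Corollary 4.6 of \cite{FabbriRusso16}'': you identify $\X$ as a convolution-type process and invoke the results of that reference, while additionally sketching (accurately) the internal mechanism — the splitting of the stochastic convolution, the transfer of the semigroup onto test elements $\phi\in D(A^*)$, and the order-$\tau$ bound on $(e^{\tau A^*}-I)\phi$ — that makes the cited corollary work. Your identification of the stochastic convolution remainder as the genuine technical obstacle, not covered by Lemma \ref{lm:boundedvar-nu-orthogonal}, is exactly right.
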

\begin{proof}
See Corollary 4.6 of \cite{FabbriRusso16}.
\end{proof}

\begin{Hypothesis}
\label{hp:on-l-and-g}
Let $l\colon [0,T] \times H \times \Lambda \to \mathbb{R}$ (the running cost) be a measurable function and $g\colon H\to\mathbb{R}$ (the terminal cost) a continuous function.
\end{Hypothesis}

\smallskip
 
We consider the class $\mathcal{U}_s$ of admissible controls constituted by
the adapted processes $a:[s,T] \times \Omega \rightarrow \Lambda$ such that $(r, \omega) \mapsto l(r, \X(r,s,x, a(\cdot)), a(r)) + 
g(\X(T, s, x, a(\cdot))) $ is $\ud r \otimes \ud \P$- is 
quasi-integrable. This means that, either  its positive or negative part are integrable. \\

\smallskip

We consider the problem of minimizing, over all $a(\cdot) \in \mathcal{U}_s$, the  cost functional
\begin{equation}
\label{eq:functional}
J(s,x;a(\cdot))=\mathbb{E}\bigg[ \int_{s}^{T} l(r, \X(r;s,x,a(\cdot)), a(r)) \ud r + g (\X(T;s,x,a(\cdot))) \bigg].
\end{equation}
The value function of this problem is defined, as usual, as
\begin{equation}
\label{eq:def-valuefunction}
V(s,x) = \inf_{a(\cdot)\in \mathcal{U}_s} J(s,x;a(\cdot)).
\end{equation}

As usual we say that the control $a^*(\cdot)\in \mathcal{U}_s$ is \emph{optimal} at $(s,x)$ if $a^*(\cdot)$ minimizes (\ref{eq:functional}) among the controls in $\mathcal{U}_s$, i.e. if $J(s,x;a^*(\cdot)) = V(s,x)$. In this case we denote by $\X^*(\cdot)$ the process $\X(\cdot; s,x, a^*(\cdot))$ which is then the corresponding \emph{optimal trajectory} of the system.

%corresponding trajectory 

%the pair $(a^*(\cdot), \X^*(\cdot))$, where $\X^*(\cdot):= \X(\cdot; s,x, a^*(\cdot))$, is called an
% \emph{optimal couple} at $(s,x)$.

\subsection{The HJB equation}

The HJB equation associated with the minimization problem above is
\begin{equation}
\label{eq:HJB}
\left\{
\begin{array}{l}
\partial_s v + \left \langle  A^* \partial_x v, x \right\rangle + \frac{1}{2} Tr \left [ \, \sigma(s,x) \sigma^*(s,x) \partial_{xx}^2 v \right ] \\[3pt]
\qquad\qquad\qquad\qquad  + \inf_{a\in \Lambda }\Big\{  \left \langle \partial_x v, b(s,x,a) \right\rangle + l(s,x,a) \Big\}=0,\\ [8pt]
v(T,x)=g(x).
\end{array}
\right.
\end{equation}
In the above equation $\partial_xv$ (respectively $\partial^2_{xx} v$) is the first
(respectively second) Fr\'echet 
derivatives of $v$ with respect to the $x$ variable.
Let $(s,x) \in [0,T] \times H$,
$\partial_xv(s,x)$  it is identified (via Riesz Representation Theorem, see \cite{Yosida80}, Theorem III.3) with elements of $H$.
 $\partial^2_{xx} v(s,x)$ which is a priori an element of
$(H \hat \otimes_\pi H)^*$ is naturally  associated 
with a symmetric bounded operator on $H$,
see \cite{Flett80},  statement 3.5.7, page 192.
In particular, if $h_1,h_2 \in H$ then 
 $\langle \partial^2_{xx} v(s,x), h_1 \otimes h_2 \rangle 
\equiv  \partial^2_{xx} v(s,x)(h_1)(h_2)$.
% with a symmetric bounded operator on $H$;
 $\partial_s v$ is the derivative with respect to the time variable.
\\
The function
\begin{equation}
\label{eq:def-CV-Hamiltonian}
F_{CV}(s,x,p;a):=  \left \langle p, b(s,x,a) \right\rangle + l(s,x,a), \quad (s,x,p,a)\in [0,T]\times H \times H\times \Lambda,
\end{equation}
is called the \emph{current value Hamiltonian} of the system and its infimum over $a\in \Lambda$ 
\begin{equation}
\label{eq:def-Hamiltonian}
F(s,x,p):= \inf_{a\in \Lambda }\left\{  \left \langle p, b(s,x,a) \right\rangle + l(s,x,a) \right\}
\end{equation}
is called the \emph{Hamiltonian}.
We remark that $F:[0,T] \times H \times H \rightarrow [-\infty +\infty[$. 
 Using this notation the HJB equation (\ref{eq:HJB}) 
can be rewritten as
\begin{equation}
\label{eq:HJB-with-F}
\left\{
\begin{array}{l}
\partial_s v+  \left \langle A^* \partial_x v, x \right\rangle + \frac{1}{2} Tr \left [ \sigma(s,x) \sigma^*(s,x) \partial^2_xv \right ] + F(s,x,\partial_xv)=0,\\[6pt]
v(T,x)=g(x).
\end{array}
\right.
\end{equation}

%The hypothesis below will be used in the sequel.
%\begin{Hypothesis}
%\label{hp:Hamiltonian}
%The Hamiltonian $F(s,x,p)$ is well-defined and finite for all $\left( s,x,p\right) \in \left[ 0,T\right] \times %H \times H$ and it is continuous in the three variables.
%\end{Hypothesis}

We introduce the operator $\mathscr{L}_0$ on $C([0,T]\times H)$ defined as
\begin{equation}
\label{eq:def-L0}
\left \{
\begin{array}{l}
D(\mathscr{L}_0):= \left \{ \varphi \in C^{1,2}([0,T]\times H) \; : \; \partial_x\varphi \in C([0,T]\times H ; D(A^*)) \right \} \\[8pt]
\mathscr{L}_0 (\varphi)(s,x) := \partial_s \varphi(s,x)+  \left \langle A^* \partial_x \varphi (s,x), x \right\rangle + \frac{1}{2} Tr \left [  \sigma(s,x) \sigma^*(s,x) \partial_{xx}^2 \varphi(s,x) \right ],
\end{array}
\right .
\end{equation}
so that the HJB equation (\ref{eq:HJB-with-F}) can be formally rewritten as 
% \[
% \left \{
% \begin{array}{l}
% \mathscr{L}_0 (v) (s,x) + F(s,x, \partial_xv(s,x)) =0\\[5pt]
% v(T,x) = g(x).
% \end{array}
% \right .
% \]
% 
% Suppose Hypotheses \ref{hp:on-l-and-g} and \ref{hp:Hamiltonian} are satisfied. We consider the following Cauchy problem
\begin{equation}
\label{eq:simil-HJB-con-h}
\left \{
\begin{array}{l}
\mathscr{L}_0 (v) (s,x) = - F(s,x, \partial_xv(s,x)) \\[5pt] 
v(T,x) = g(x).
\end{array}
\right .
\end{equation}

Recalling that we suppose the validity of Hypothesis \ref{hp:on-l-and-g} % and \ref{hp:Hamiltonian}, 
we consider the two following definitions of solution of the HJB equation.
\begin{Definition}
\label{def:classical-sol}
We say that $v \in C([0,T]\times H)$ is a {\bf classical solution} of (\ref{eq:simil-HJB-con-h}) if 
\begin{itemize}
 \item[(i)] $v\in D(\mathscr{L}_0)$
 \item[(ii)] The function  
 \[
 \left \{
\begin{array}{l}
\left[ 0,T\right] \times H \to \mathbb{R}\\
(s,x) \mapsto F(s,x,\partial_xv(s,x))
\end{array}
\right .
 \]
is well-defined and finite for all $\left( s,x\right) \in \left[ 0,T\right] \times H$ and it is continuous in the two variables
\item[(iii)] (\ref{eq:simil-HJB-con-h}) is satisfied at any $\left( s,x\right) \in \left[ 0,T\right] \times H$.
\end{itemize}
\end{Definition}

\begin{Definition}
\label{def:strong-sol}
Given $g\in C(H)$ we say that $v  \in C^{0,1}([0,T[ \times H)  \cap  C^{0}([0,T] \times H)$ with  $\partial_x v \in UC([0,T[\times H; D(A^*))$
\footnote{The space of uniformly
continuous functions on each ball of $[0,T[ \times H$ with values
 in $D(A^*)$.} 
 is a {\bf strong solution} of (\ref{eq:simil-HJB-con-h}) if the following properties hold.
\begin{itemize}
 \item[(I)] The function $(s,x) \mapsto F(s,x,\partial_xv(s,x))$ is finite
 for all $\left(s,x\right) \in \left[ 0,T\right [ \times H$, it is continuous in the two variables
and admits continuous extension on  $\left[ 0,T\right ] \times H$.
\item[(II)] There exist three sequences $ \{v_n \} \subseteq D(\mathscr{L}_0)$, $\{h_n\} \subseteq C([0,T]\times H)$ and $ \{ g_n \} \subseteq C(H)$ fulfilling the following.
\begin{enumerate}
\item[(i)] For any $n\in\mathbb{N}$, $v_n$ is a classical solution of the problem
\begin{equation}
\label{eq:approximating}
\left \{
\begin{array}{l}
\mathscr{L}_0 (v_n) (s,x) = h_n(s,x)\\[5pt]
v_n(T,x) = g_n(x).
\end{array}
\right .
\end{equation}
\item[(ii)] The following convergences hold:
\[
\left \{
\begin{array}{ll}
v_n\to v & \text{in} \;\; C([0,T]\times H)\\
h_n\to - F(\cdot,\cdot, \partial_xv(\cdot,\cdot)) & \text{in} \;\; C([0,T]\times H)\\
g_n\to g & \text{in} \;\; C(H),
\end{array}
\right .
\]
where the convergences in $C([0,T]\times H)$ and $C(H)$ are meant in the sense of uniform convergence on compact sets.
\end{enumerate}
\end{itemize}
\end{Definition}

\begin{Remark}
\label{rm:comparison-definition-literature}
The notion of \emph{classical solution} as defined in Definition \ref{def:classical-sol} is well established in the literature of second-order infinite dimensional Hamilton-Jacobi equations, see for instance Section 6.2 of \cite{DaPratoZabczyk02}, page 103. Conversely the denomination \emph{strong solution} is used for a certain number of definitions where the solution of the Hamilton-Jacobi equation is characterized by the existence of a certain approximating sequence (having certain properties and) converging to the candidate solution. The chosen functional spaces and the prescribed convergences depend on the classes of equations, see for instance \cite{BarbuDaPrato83book, Cerrai01, Gozzi96, Gozzi02, Priola99}. In this sense the solution defined in Definition \ref{def:strong-sol} is a form of strong solution of (\ref{eq:simil-HJB-con-h}) but, differently to all other papers we know\footnote{Except \cite{GozziRusso06Stoch}, but there the HJB equation and the optimal controls are finite dimensional.} we do not require any form of convergence of the derivatives of the approximating functions to the derivative of the candidate solution. Moreover all the results we are aware of use sequences of bounded approximating functions (i.e. the $v_n$ in the definition are bounded) and this is not required in our definition. All in all the sets of approximating sequences that we can manage are bigger than those used in the previous literature and so the definition of strong solution is weaker.
\end{Remark}

\color{black}

\subsection{Decomposition for solutions of the HJB equation}

\begin{Theorem}
\label{th:decompo-sol-HJB}
Suppose Hypothesis \ref{hp:onbandsigma} is satisfied. % and \ref{hp:Hamiltonian} are satisfied.  
Suppose that $v  \in C^{0,1}([0,T[ \times H)  \cap  C^{0}([0,T] \times H)$ with $\partial_x v \in UC([0,T[\times H; D(A^*))$
%\footnote{The space of uniformly
%continuous functions on each ball of $[0,T[ \times H$ with values
% in $D(A^*)$.} 
  is a strong solution of (\ref{eq:simil-HJB-con-h}). Let $\X(\cdot):=\X(\cdot;t,x,a(\cdot))$ be the solution of (\ref{eq:state}) starting at time $s$ at some $x\in H$ and driven by some control $a(\cdot)\in \mathcal{U}_s$.
Assume that $b$ is of the form 
\begin{equation}\label{eq:b}
b(t,x,a) = b_g(t,x,a) + b_i(t,x,a),
\end{equation}
where $b_g$ and $b_i$ satisfy the following conditions.
\begin{itemize}
\item[(i)] $\sigma(t,\X(t))^{-1} b_g(t,\X(t),a(t))$ is bounded (being $\sigma(t,\X(t))^{-1}$ the pseudo-inverse of $\sigma$);
\item[(ii)] $b_i$ satisfies 
\begin{equation}
\label{eq:conv-ucp-b}
\lim_{n\to\infty} \int_s^\cdot \left \langle \partial_x v_n (r,\X(r)) - \partial_x v (r,\X(r)), b_i(r,\X(r), a(r)) \right\rangle \ud r =0 \quad \text{ucp on $[s,T_0]$},
\end{equation}
for each $s < T_0 <T$. 
\end{itemize}
Then
% for each $s < T_0 <T$,
\begin{multline} \label{E86}
v(t, \X(t)) - v(s, \X(s)) =  v(t, \X(t)) - v(s, x) = - \int_s^t F(r,\X(r), \partial_xv(r,\X(r))) \ud r \\
+ \int_s^t \left\langle \partial_x v(r, \X(r)), b(r,\X(r), a(r)) \right\rangle \ud r
+ \int_s^t \left\langle \partial_x v(r, \X(r)), \sigma (r,\X(r)) \ud \W_Q(r) \right\rangle,  \ t \in [s,T[.
\end{multline}
\end{Theorem}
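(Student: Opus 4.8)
The plan is to pass to the limit in the classical Itô--Dynkin formula satisfied by the approximating solutions $v_n$, using a Girsanov transformation to absorb the ``good'' part $b_g$ of the drift into the noise and then invoking the uniqueness of the weak Dirichlet decomposition to identify the limit of the stochastic integrals, which is the only term for which no direct convergence of $\partial_x v_n$ is available.

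First I would fix $T_0\in\,]s,T[$ and work on $[s,T_0]$, where $v$ fulfils the hypotheses of Theorem \ref{th:prop6} with $\bar\nu_0=D(A^*)$ and $\nu,\bar\chi$ as in Proposition \ref{cor:X-barchi-Dirichlet}. Since $\sigma(t,\X(t))^{-1}b_g(t,\X(t),a(t))$ is bounded by (i), Novikov's condition holds and I can introduce a probability $\widetilde{\mathbb{P}}\sim\mathbb{P}$ under which
\[
\widetilde\W_Q(t):=\W_Q(t)+\int_s^t\sigma(r,\X(r))^{-1}b_g(r,\X(r),a(r))\ud r
\]
is a $Q$-Wiener process. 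As $b_g$ lies in the range of $\sigma$, so that $\sigma\sigma^{-1}b_g=b_g$, the identity $\sigma\ud\widetilde\W_Q=\sigma\ud\W_Q+b_g\ud r$ shows that under $\widetilde{\mathbb{P}}$ the process $\X$ solves the mild equation with drift $b_i$ and noise $\int_s^\cdot\sigma\ud\widetilde\W_Q$. Applying Proposition \ref{cor:X-barchi-Dirichlet} under $\widetilde{\mathbb{P}}$, $\X$ is a $\nu$-weak-Dirichlet process whose martingale part is $\widetilde\M(t)=x+\int_s^t\sigma(r,\X(r))\ud\widetilde\W_Q(r)$, and Theorem \ref{th:prop6} then gives that $v(\cdot,\X(\cdot))$ is a real weak Dirichlet process under $\widetilde{\mathbb{P}}$ with local martingale part $v(s,x)+\int_s^\cdot\langle\partial_x v(r,\X(r)),\sigma(r,\X(r))\ud\widetilde\W_Q(r)\rangle$.

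Next, each $v_n$ is $C^{1,2}$ with $\partial_x v_n$ continuous and $D(A^*)$-valued, so the Itô--Dynkin formula for mild solutions applies; using $\mathscr{L}_0(v_n)=h_n$ and rewriting $b=b_g+b_i$ in terms of $\widetilde\W_Q$ yields
\[
v_n(t,\X(t))=v_n(s,x)+\int_s^t h_n(r,\X(r))\ud r+\int_s^t\langle\partial_x v_n(r,\X(r)),b_i(r,\X(r),a(r))\rangle\ud r+\int_s^t\langle\partial_x v_n(r,\X(r)),\sigma(r,\X(r))\ud\widetilde\W_Q(r)\rangle.
\]
I would then let $n\to\infty$ in the ucp sense under $\widetilde{\mathbb{P}}$ (which, being equivalent to $\mathbb{P}$, preserves ucp convergence). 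Since the trajectory $\X([s,T_0])$ is a.s.\ compact, uniform convergence on compacts gives $v_n(\cdot,\X(\cdot))\to v(\cdot,\X(\cdot))$ and $\int_s^\cdot h_n\ud r\to-\int_s^\cdot F(r,\X(r),\partial_x v(r,\X(r)))\ud r$, while the $b_i$-integral converges by the assumption \eqref{eq:conv-ucp-b}. Consequently the remaining term $\int_s^\cdot\langle\partial_x v_n,\sigma\ud\widetilde\W_Q\rangle$ converges ucp to some process $L$ with $L(s)=0$, and as a ucp limit of continuous local martingales $L$ is itself a continuous local martingale under $\widetilde{\mathbb{P}}$.

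Passing to the limit yields $v(\cdot,\X(\cdot))=v(s,x)+L+C$ with $C=-\int_s^\cdot F\ud r+\int_s^\cdot\langle\partial_x v,b_i\rangle\ud r$ of bounded variation, hence martingale-orthogonal. This is a weak Dirichlet decomposition of $v(\cdot,\X(\cdot))$ under $\widetilde{\mathbb{P}}$, so by the uniqueness statement of Theorem \ref{th:uniqueness-weak-Dirichlet} it coincides with the one found above, forcing $L(t)=\int_s^t\langle\partial_x v(r,\X(r)),\sigma(r,\X(r))\ud\widetilde\W_Q(r)\rangle$. Undoing the Girsanov substitution through $\sigma\ud\widetilde\W_Q=\sigma\ud\W_Q+b_g\ud r$ and recombining $b_g+b_i=b$ then produces exactly \eqref{E86} on $[s,T_0]$, and since $T_0<T$ was arbitrary the identity holds on $[s,T[$. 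The main obstacle is precisely the absence of any convergence of $\partial_x v_n$ to $\partial_x v$: this is what forces the indirect identification of $L$ through the change of measure and the uniqueness of the weak Dirichlet decomposition, instead of a direct limit in the stochastic integral, and it is the point where the weakening of the hypotheses compared with the strong-solution literature is paid for.
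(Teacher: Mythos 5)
Your proposal follows essentially the same route as the paper's proof: It\^o's formula for the classical approximants $v_n$, a Girsanov change of measure to absorb $b_g$ into the noise, ucp convergence of the resulting local martingales (using the strong-solution convergences and \eqref{eq:conv-ucp-b}), identification of the limit via Theorem \ref{th:prop6} together with the uniqueness of the weak Dirichlet decomposition, and finally undoing the change of measure. The only minor divergence is that you re-apply Proposition \ref{cor:X-barchi-Dirichlet} to the transformed equation with drift $b_i$ --- which need not itself satisfy Hypothesis \ref{hp:onbandsigma} --- whereas the paper keeps the original decomposition and writes $\X=\tilde \M+\V+\A$ with $\V(t)=-\int_s^t b_g(r,\X(r),a(r))\ud r$ of bounded variation, invoking Lemma \ref{lm:boundedvar-nu-orthogonal} to conclude that $\V+\A$ is still $\nu$-martingale-orthogonal under the new measure; this is the cleaner way to justify that $\X$ remains a $\nu$-weak Dirichlet process with martingale part $\tilde\M$ after the Girsanov transformation.
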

\begin{proof}
We fix $T_0$ in $]s,T[$.
We  denote  by  $v_n$ the sequence of
smooth solutions of the approximating problems prescribed 
by Definition \ref{def:strong-sol}, which converges to $v$.
Thanks to It\^o formula for convolution type processes (see e.g. Corollary 4.10 in \cite{FabbriRusso16}), every $v_n$ verifies
\begin{multline}
\label{eq:v-Ito-pre}
v_n(t,\X(t)) =  v_n(s,x)  + \int_s^t \partial_r {v_n}(r,\X(r)) \ud r \\ 
 +   \int_s^t \left\langle A^* \partial_x v_n(r,\X(r)), \X(r) \right\rangle \ud r
 +  \int_s^t \left\langle \partial_x v_n(r,\X(r)), b(r, \X(r), a(r)) 
\right\rangle \ud r \\
+ \frac{1}{2}  \int_s^t \text{Tr} \left [\left ( \sigma (r, \X(r)) {Q}^{1/2}
 \right )
\left ( \sigma (r, \X(r)) Q^{1/2}  \right)^* \partial_{xx}^2 
v_n(r,\X(r)) \right ] \ud r  \\
+ \int_s^t \left\langle \partial_x v_n(r,\X(r)), \sigma(r, \X(r)) \ud \W_Q(r)
 \right\rangle, \ t \in [s,T]. \qquad \mathbb{P}-{\rm a.s.}
\end{multline}
Using Girsanov's Theorem (see \cite{DaPratoZabczyk14} Theorem 10.14) we can observe that
\[
\beta_Q(t) := W_Q(t) + \int_s^t \sigma(r,\X(r))^{-1} b_g(r,\X(r), a(r)) \ud r,
\]
is a $Q$-Wiener process with respect to
 a probability $\mathbb{Q}$ equivalent to $\mathbb{P}$ on the whole interval $[s,T]$.
 We can rewrite (\ref{eq:v-Ito-pre}) as
\begin{multline}
\label{eq:v-Ito}
v_n(t,\X(t)) =  v_n(s,x)  + \int_s^t \partial_r {v_n}(r,\X(r)) \ud r\\  +  \int_s^t \left\langle A^* \partial_x v_n(r,\X(r)), \X(r) \right\rangle \ud r
 +  \int_s^t \left\langle \partial_x v_n(r,\X(r)), b_i(r, \X(r), a(r)) \right\rangle \ud r,\\
+ \frac{1}{2}  \int_s^t \text{Tr} \left [\left ( \sigma (r, \X(r)) {Q}^{1/2} \right )
\left ( \sigma (r, \X(r)) Q^{1/2}  \right)^* \partial_{xx}^2 v_n(r,\X(r)) \right ] \ud r\\
+ \int_s^t \left\langle \partial_x v_n(r,\X(r)), \sigma(r, \X(r)) \ud \beta_Q(r) \right\rangle. \qquad \mathbb{P}-a.s.
\end{multline}
Since $v_n$ is a classical solution of (\ref{eq:approximating}), the expression
above gives
\begin{multline}
v_n(t,\X(t)) =  v_n(s,x)  + \int_s^t {h_n}(r,\X(r)) \ud r\\ +  \int_s^t \left\langle \partial_x v_n(r,\X(r)), b_i(r, \X(r), a(r)) \right\rangle \ud r
+ \int_s^t \left\langle \partial_x v_n(r,\X(r)), \sigma(r, \X(r)) \ud \beta_Q(r)\right\rangle.
\end{multline}
Since we wish to take the limit for $n\to\infty$, we define
\begin{multline}
M_n(t) := v_n(t,\X(t)) -  v_n(s,x)  - \int_s^t {h_n}(r,\X(r)) \ud r \\
-  \int_s^t \left\langle \partial_x v_n(r,\X(r)), b_i(r, \X(r), a(r)) \right\rangle \ud r.
\end{multline}
$\{ M_n \}_{n\in\mathbb{N}}$ is a sequence of real $\mathbb{Q}$-local
 martingales converging ucp, thanks to the definition of strong solution
 and Hypothesis (\ref{eq:conv-ucp-b}), to 
\begin{multline} \label{E72}
M(t) := v(t,\X(t)) -  v(s,x)  + \int_s^t F(r,\X(r), \partial_xv(r,\X(r))) \ud r \\
-  \int_s^t \left\langle \partial_x v(r,\X(r)), b_i(r, \X(r), a(r)) \right\rangle \ud r, \ t \in [s,T_0].
\end{multline} 
Since the space of real continuous local martingales equipped with 
the ucp topology is closed (see e.g. Proposition 4.4 of \cite{GozziRusso06}) 
then $M$ is a continuous $\mathbb{Q}$-local martingale indexed by
$t \in [s,T_0]$.

\medskip
We have now gathered all the ingredients to conclude the proof.
%As in Proposition \ref{cor:X-barchi-Dirichlet},
 We set $\bar \nu_0 = D(A^*)$, 
$\nu = \bar \nu_0 \hat\otimes_\pi \R, \bar \chi = \bar \nu_0 \hat \otimes_\pi \bar \nu_0$. Proposition \ref{cor:X-barchi-Dirichlet} ensures that $\X(\cdot)$ is 
a $\nu$-weak Dirichlet process admitting a $\bar \chi$-quadratic variation  with decomposition $\M + \A$ where $\M$  is  the local martingale  (with respect to $\P$) defined by $\M(t) = x + \int_s^t \sigma (r, \X(r)) \ud \W_Q(r)$ and $\A$ is a $\nu$-martingale-orthogonal process.
Now 
$$\X(t) = \tilde \M(t) 
 + \V(t) + \A(t),
 t \in [s,T_0],$$ 
where $\tilde \M(t) =  x + \int_s^t \sigma (r, \X(r)) \ud \beta_Q(r)$
and  $\V(t) = - \int_s^t b_g(r, \X(r),a(r)) dr$, $t \in [s,T_0]$,
is a bounded variation process. Thanks to \cite{KrylovRozovskii07} Theorem 2.14 page 14-15, $\tilde \M$ is a $\Q$-local martingale. Moreover 
$\V$ is a bounded variation process and then, thanks to Lemma  
\ref{lm:boundedvar-nu-orthogonal}, it is a $\Q-\nu$-martingale orthogonal process.
 So $\V + \A$ is a again (one can easily verify that the sum of two $\nu$-martingale-orthogonal processes is again a $\nu$-martingale-orthogonal process) a $\Q-\nu$-martingale orthogonal process and $\X$ is a $\nu$-weak Dirichlet process with  local martingale part $\tilde \M$, with respect to $\Q$.
Still under $\Q$, since $v \in C^{0,1}([0,T_0] \times H)$, 
Theorem \ref{th:prop6} 
 ensures that the process $v(\cdot, \X(\cdot))$ is a real
 weak Dirichlet process on $[s,T_0]$,
 whose local martingale part being equal to
\[
N(t) = \int_s^t \left\langle \partial_x v(r,\X(r)), \sigma(r, \X(r)) \ud \beta_Q(r)\right\rangle, t \in [s,T_0]. 
\]
On the other hand, with respect to $\mathbb Q$, \eqref{E72} implies that 
\begin{multline}
v(t,\X(t)) = \bigg [  v(s,x)  - \int_s^t F(r,\X(r), \partial_xv(r,\X(r))) \ud r \\
+  \int_s^t \left\langle \partial_x v(r,\X(r)), b_i(r, \X(r), a(r)) \right\rangle \ud r \bigg ] + N(t), \ t \in [s,T_0],
\end{multline}
is a decomposition of $v(\cdot,\X(\cdot))$ as $\mathbb Q$-
semimartingale, which is also in particular, a  $\mathbb Q$-weak Dirichlet process.
% see the lines after Definition 1.1 in \cite{FabbriRusso16}.
 By Theorem \ref{th:uniqueness-weak-Dirichlet} such a decomposition is unique 
on $[s,T_0]$  and so
$ M(t) = N(t), t \in [s,T_0]$, so $ M(t) = N(t), t \in [s,T[$.
%  Since $M$
% is a local martingale on $[s,T]$ then the stochastic
% integral defining $N$ extends for $t \in [s,T]$ for the following reasons.

%  QUESTO FATTO NON E' COMPLETAMENTE BANALE.
% LO SO FARE NEL CASO FINITO DIMENSIONALE SOLO VEDI SOTTO

% By Kunita-Watanabe theorem $M$ is a (continuous) local martingale on $[s,T]$, there is a predictable process $R$ and a local martingale $O$ such that
% $\langle \beta^Q, O \rangle = 0$ and 
% $M_t = \int_0^t R_r d\beta_Q(r)  + O_t$.
% So for every $t \in [s,T_0]$, $O_t = 
% \int_s^t (R_r - (\partial v \sigma) (r, \X(r)) d \beta_Q(r)$.
% Evaluating the  covariation with respect to $\beta_Q$ on 
% both sides we easily get that $R_r = (\partial v \sigma) (r, \X(r)),
% r \in [s,T]$. In particular the It\^o stochastic integral defining
% $N$ is well-defined and it extends to $ t= T$.
%  FINE 

Consequently 
  \begin{multline}
 M(t)  = \int_s^t \left\langle \partial_x v(r,\X(r)), \sigma(r, \X(r)) \ud \beta_Q(r)\right\rangle\\ 
 = \int_s^t \left\langle \partial_x v(r,\X(r)), b_g(r, \X(r),a(r)) \ud r\right\rangle\\
 + \int_s^t \left\langle \partial_x v(r,\X(r)), \sigma(r, \X(r))
  \ud \W_Q(r)\right\rangle, \ t \in [s,T].
 \end{multline}
\end{proof}

\begin{Example} \label{EThSol-HJB}
The decomposition \eqref{eq:b} with validity of
Hypotheses $(i)$ and $(ii)$ in Theorem \ref{th:decompo-sol-HJB} are satisfied if $v$ is a strong solution of the HJB equation in the sense of Definition \ref{def:strong-sol} and, moreover the sequence of corresponding functions
 $\partial_x v_n$ converge to $\partial_xv$ in $C([0,T]\times H)$. 
In that case we simply set $b_g = 0$ and $b = b_i$.
 This is the typical assumption required in the standard strong solutions literature. 
\end{Example}
\begin{Example} \label{EThSol-HJB1}
Again the decomposition \eqref{eq:b} with validity of
Hypotheses $(i)$ and $(ii)$ in Theorem \ref{th:decompo-sol-HJB} 
 is fulfilled
 if the following assumption is satisfied.
\[
\sigma(t,\X(t))^{-1} b(t,\X(t),a(t)) \text{ is bounded},
\]
for all choice of admissible controls $a(\cdot)$.
In this case we apply Theorem  \ref{th:decompo-sol-HJB}  
with $b_i = 0$  and $b=b_g$.
% In this case we do not have to check the hypothesis $(ii)$ of Theorem \ref{th:decompo-sol-HJB} that depends on all the sequence $v_n$ and not only on the (strong) solution $v$.
\end{Example}

\section{Verification Theorem}
\label{sec:verification}

In this section, as anticipated in the introduction, we use the decomposition result of Theorem \ref{th:decompo-sol-HJB} to prove a verification theorem.

\begin{Theorem}
\label{th:verification}
Assume that Hypotheses \ref{hp:onbandsigma} and \ref{hp:on-l-and-g} %and \ref{hp:Hamiltonian} 
are satisfied and that the value function is finite for any $(s,x)\in [0,T]\times H$. 
Let  $v  \in C^{0,1}([0,T[ \times H)  \cap  C^{0}([0,T] \times H)$
with $\partial_x v \in UC([0,T[\times H; D(A^*))$  be a
 strong solution of (\ref{eq:HJB}) and suppose that there exists two constants $M>0$ and $m\in \mathbb{N}$ such that 
 $| \partial_xv(t,x)| \leq M(1+|x|^{m})$ for all $(t,x)\in [0,T[\times H$.\\
Assume that for all initial data
 $(s,x)\in [0,T]\times H$ and every control $a(\cdot)\in \mathcal{U}_s$ $b$
 can be written as $b(t,x,a) = b_g(t,x,a) + b_i(t,x,a)$ with $b_i$ and $b_g$
 satisfying hypotheses (i) and (ii) of Theorem \ref{th:decompo-sol-HJB}.
 % Let $v$ such that
 % $\partial_xv$
 % has most polynomial growth in the $x$ variable.
 Then
we have the following.
\begin{enumerate}
\item[(i)]  $v\leq V$ on $[0,T]\times H$.
\item[(ii)]  Suppose that, for some $s\in [0,T[$, 
there exists a 
predictable process $a(\cdot) = a^*(\cdot)  \in \mathcal{U}_s$ 
such that, denoting $\X\left( \cdot;s,x,a^*(\cdot)\right)$ simply
 by $\X^*(\cdot)$,
we have
\begin{equation}
\label{eq:condsuff}
F\left( t, \X^*\left( t\right) ,\partial_xv\left( t,\X^*\left( t\right)
\right) \right) =F_{CV}\left( t,\X^*\left( t\right) ,\partial_xv\left(
t,\X^*\left( t\right) \right) ;a^*\left( t\right) \right),
\end{equation}
  $dt \otimes \ud \P$ a.e. 
Then  $a^*(\cdot)$ is optimal at $\left( s,x\right) $; moreover 
$v\left( s,x\right) =V\left(s,x\right)$.
\end{enumerate}
\end{Theorem}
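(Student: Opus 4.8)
The plan is to feed an arbitrary admissible control into the decomposition formula \eqref{E86} of Theorem \ref{th:decompo-sol-HJB}, exploit that $F$ is an infimum to convert the drift into a one-sided estimate, and then take expectations after killing the stochastic-integral term by localization. For part (i) I would fix $(s,x)\in[0,T]\times H$ and an admissible control $a(\cdot)\in\mathcal{U}_s$ with associated trajectory $\X(\cdot)$. Since $v$ is a strong solution and, by assumption, $b=b_g+b_i$ satisfies hypotheses (i)--(ii) of Theorem \ref{th:decompo-sol-HJB}, the formula \eqref{E86} holds on $[s,T[$. Writing $N(t):=\int_s^t\langle\partial_x v(r,\X(r)),\sigma(r,\X(r))\,\ud\W_Q(r)\rangle$ for the continuous local martingale part, the defining property \eqref{eq:def-Hamiltonian} of $F$ yields the pointwise bound $F(r,\X(r),\partial_x v(r,\X(r)))\le\langle\partial_x v(r,\X(r)),b(r,\X(r),a(r))\rangle+l(r,\X(r),a(r))$, so that \eqref{E86} turns into the inequality
\[
v(t,\X(t)) \ge v(s,x) - \int_s^t l(r,\X(r),a(r))\,\ud r + N(t), \qquad t\in[s,T[.
\]

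Next I would fix $T_0\in\,]s,T[$ and introduce a localizing sequence of stopping times $\tau_k\uparrow T_0$ reducing $N$ to a true martingale on $[s,T_0]$; evaluating the inequality at $T_0\wedge\tau_k$ and taking expectations makes the martingale term drop out, giving
\[
\mathbb{E}\big[v(T_0\wedge\tau_k,\X(T_0\wedge\tau_k))\big] \ge v(s,x) - \mathbb{E}\Big[\int_s^{T_0\wedge\tau_k} l(r,\X(r),a(r))\,\ud r\Big].
\]
The delicate passage is the double limit $k\to\infty$ followed by $T_0\uparrow T$. Here I would use that $\X(\cdot)$ is continuous up to $T$, that $v\in C^0([0,T]\times H)$ with $v(T,\cdot)=g$, and the polynomial growth bound $|\partial_x v(t,x)|\le M(1+|x|^m)$ together with the standard moment estimates for mild solutions of \eqref{eq:state} available under Hypothesis \ref{hp:onbandsigma}, both to justify the reducibility of $N$ and to control $v(T_0\wedge\tau_k,\X(T_0\wedge\tau_k))$ as $k\to\infty$. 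Since $a(\cdot)\in\mathcal{U}_s$ only guarantees quasi-integrability of the cost, I would treat the positive and negative parts separately, invoking Fatou's lemma (resp.\ monotone or dominated convergence) to arrive at $v(s,x)\le\mathbb{E}[\int_s^T l\,\ud r+g(\X(T))]=J(s,x;a(\cdot))$; taking the infimum over $a(\cdot)\in\mathcal{U}_s$ then gives $v\le V$.

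For part (ii) the single inequality used above collapses to an equality. Indeed, condition \eqref{eq:condsuff} states precisely that along the candidate pair $(\X^*,a^*)$ one has $F(r,\X^*(r),\partial_x v(r,\X^*(r)))=\langle\partial_x v(r,\X^*(r)),b(r,\X^*(r),a^*(r))\rangle+l(r,\X^*(r),a^*(r))$ for $dt\otimes\ud\P$-a.e.\ $(r,\omega)$, so that repeating the computation of part (i) verbatim with every $\ge$ replaced by $=$ produces $v(s,x)=J(s,x;a^*(\cdot))$. Combined with $V(s,x)\le J(s,x;a^*(\cdot))$ and the bound $v(s,x)\le V(s,x)$ from part (i), this forces $v(s,x)=V(s,x)$ and shows that $a^*(\cdot)$ is optimal at $(s,x)$.

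I expect the main obstacle to be precisely the limiting argument rather than any conceptual difficulty: the decomposition \eqref{E86} already does the heavy lifting, replacing the It\^o formula that is unavailable for a merely $C^{0,1}$ solution. What remains is the quantitative work of handling the \emph{local} (rather than true) martingale $N$ through localization, and of exchanging limit and expectation under the weak quasi-integrability hypothesis defining $\mathcal{U}_s$, where the growth condition on $\partial_x v$ and the moment bounds on $\X$ enter as the only analytical inputs.
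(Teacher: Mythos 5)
Your proposal is correct and follows the same core strategy as the paper: plug an arbitrary admissible control into \eqref{E86}, use that $F$ is the infimum of $F_{CV}$, and take expectations to eliminate the stochastic integral term. The two places where you anticipate the main difficulty are, however, handled more directly in the paper, and in a way that avoids the machinery you introduce. First, no localization and no double limit in $k$ and $T_0$ are needed: the polynomial growth $|\partial_x v(t,x)|\le M(1+|x|^{m})$, the Lipschitz bound on $\sigma$, and the finiteness of all moments of $\sup_{r\in[s,T]}|\X(r)|$ (Proposition 7.4 of \cite{DaPratoZabczyk14}) give $\mathbb{E}\int_s^T \langle \partial_x v, (\sigma Q^{1/2})(\sigma Q^{1/2})^{*}\partial_x v\rangle\,\ud r<\infty$, so the stochastic integral is a \emph{true} martingale with zero expectation --- this is precisely the role of the growth hypothesis in the statement, not merely to ``justify reducibility'' (a continuous local martingale is always reducible). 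The extension of \eqref{E86} from $[s,T[$ to $t=T$ is obtained pathwise from the continuity of $v$ on $[0,T]\times H$ and the continuous extension of $(s,x)\mapsto F(s,x,\partial_x v(s,x))$ built into Definition \ref{def:strong-sol}, not by a limit of expectations as $T_0\uparrow T$. Second, rather than passing to the one-sided inequality $-F\ge -F_{CV}$ and then appealing to Fatou or monotone convergence, the paper keeps the exact identity, adds $\int_s^T l\,\ud r$ to both a.s.\ finite sides, and isolates the nonnegative integrand $F_{CV}-F\ge 0$: its expectation always exists in $[0,+\infty]$, which simultaneously settles the quasi-integrability issue (showing a posteriori that $\int_s^T l\,\ud r>-\infty$ a.s.) and yields the single identity $J(s,x;a(\cdot))=v(s,x)+\mathbb{E}\int_s^T(F_{CV}-F)\,\ud r$, from which (i) and (ii) are both read off immediately, with no need to rerun the argument with equalities in part (ii). Your sign-handling sketch for the quasi-integrable cost is the one genuinely fragile step: taking expectations of your stopped inequality requires the expectation of $\int_s^{T_0\wedge\tau_k} l\,\ud r$ to be well defined, which the bare quasi-integrability over $[s,T]\times\Omega$ does not hand you for free; the paper's rearrangement into a nonnegative remainder is what makes this painless.
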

\begin{proof}
We choose a control $a(\cdot) \in \mathcal{U}_s$  and call $\X$ the related trajectory. We make use of  \eqref{E86} in Theorem
 \ref{th:decompo-sol-HJB}. 
Then we need to extend \eqref{E86} to the case
when $t \in [s,T]$.
% ERA SBAGLIATO ECCO COME ERA PRIMA
%  This is possible since $v$ is continuous, 
%   $\partial_x v$  is locally bounded and $F$ is uniformly continuous on
% compact sets, also using
% Hypothesis \ref{hp:onbandsigma} for $b$ and $\sigma$. 
 This is possible since $v$ is continuous, 
$(s,x) \mapsto F(s,x,\partial_xv(s,x))$ is well-defined and (uniformly continuous) on compact sets.
At this point, setting $t = T$ 
 we can write
 \begin{multline}
\label{ex66bis}
g(\X(T)) = v(T, \X(T)) = v(s, x) - \int_s^T F (r, \X(r), \partial_xv(r,\X(r))) \ud r \\
+ \int_s^T \left\langle \partial_x v(r, \X(r)), b(r,\X(r), a(r)) \right\rangle \ud r
+ \int_s^T \left\langle \partial_x v(r, \X(r)), \sigma (r,\X(r)) \ud \W_Q(r) \right\rangle.
\end{multline}
Since both sides of \eqref{ex66bis} are a.s. finite, we can add 
 $\int_s^T l(r, \X(r) , a(r)) \ud r$ to them, obtaining 
\begin{multline}
\label{eq:ex66}
g(\X(T)) + \int_s^T l(r, \X(r) , a(r)) \ud r 
 = v(s, x) + \int_s^T \left\langle \partial_x v(r, \X(r)), \sigma (r,\X(r)) \ud \W_Q(r) \right\rangle \\
+ \int_s^T \left(- F (r, \X(r), \partial_xv(r,\X(r))) +
 F_{CV} (r, \X(r), \partial_xv(r,\X(r));a(r))\right) \ud r.
\end{multline}
Observe now that, by definition of $F$ and $F_{CV}$ we know that 
\[
- F (r, \X(r), \partial_xv(r,\X(r))) + F_{CV} (r, \X(r), \partial_xv(r,\X(r))
;a(r))
\]
is always positive. So its expectation always exists even if it 
could be $+\infty$, but not $-\infty$ on an event of positive probability.
This shows a posteriori that 
$ \int_s^T l(r, \X(r) , a(r)) \ud r $ cannot be $-\infty$
on a set of positive probability. \\
By Proposition 7.4 in \cite{DaPratoZabczyk14}, all the momenta of 
$\sup_{r \in [s,T]} \vert \X(r) \vert $ are 
finite.
On the other hand, $\sigma$ is Lipschitz-continuous,
  $v(s,x)$ is deterministic and, 
 since $\partial_x v$ has polynomial growth, 
  then
$$ \mathbb{E} \int_s^T \left\langle \partial_x v(r, \X(r)), \left ( \sigma (r,\X(r)) Q^{1/2} \right ) \left ( \sigma (r,\X(r)) Q^{1/2} \right )^* \partial_x v(r, \X(r)) \right\rangle \ud r
$$
is finite. Consequently (see \cite{DaPratoZabczyk14} Sections 4.3, in particular Theorem 4.27 and 4.7),
\[
 \int_s^\cdot \left\langle \partial_x v(r, \X(r)), \sigma (r,\X(r)) \ud
 \W_Q(r) \right\rangle,
\]
is  a true martingale vanishing at $s$.  Consequently,
 its expectation is zero. 
 So the expectation of the right-hand side of (\ref{eq:ex66})
 exists even if it could be $+\infty$; consequently the same holds for the left-hand side.\\
By definition of $J$, we have
\begin{multline}
\label{eq:finale}
J(s,x,a(\cdot)) = \mathbb{E} \bigg [ g(\X(T)) + \int_s^T l(r, \X(r) , a(r)) \ud r \bigg ] = v(s, x) \\
+ \mathbb{E} \int_s^T  \Big ( - F (r, \X(r), \partial_xv(r,\X(r))) + F_{CV}(r, \X(r), \partial_xv(r,\X(r)); a(r)) \Big ) \ud r.
\end{multline}
So minimizing $J(s,x,a(\cdot))$ over $a(\cdot)$ is equivalent to minimize
\begin{equation}
\label{eq:ultima-2}
\mathbb{E} \int_s^T  \Big ( - F (r, \X(r), \partial_xv(r,\X(r))) + F_{CV}(r, \X(r), \partial_xv(r,\X(r)); a(r)) \Big )\ud r,
\end{equation}
which is a non-negative quantity.
As mentioned above, the integrand of such an expression is always nonnegative and then a lower bound for (\ref{eq:ultima-2}) is $0$. If the conditions of point (ii) are satisfied such a bound is attained by the control $a^*(\cdot)$, that in this way is proved to be optimal.

Concerning the proof of (i), since the integrand in (\ref{eq:ultima-2}) is nonnegative,  (\ref{eq:finale}) gives
\[
J(s,x,a(\cdot)) \geq  v(s, x).
\]
Taking the inf over $a(\cdot)$ we get $V(s,x) \geq v(s,x)$, which concludes the proof.
\end{proof}

\begin{Remark} \label{RFeedback}
\begin{enumerate}
\item The first part of the proof does not make use that
$a$ belongs to ${\mathcal U}_s$, but only that
$r \mapsto l(r,\X(\cdot,s,x,a(\cdot)), a(\cdot))$ 
is a.s. strictly bigger then $-\infty$.  Under  that only assumption, 
$a(\cdot)$ is forced to  be admissible, i.e. to belong to
${\mathcal U}_s$.
\item Let $v$ be a strong solution of HJB equation.
Observe that the condition (\ref{eq:condsuff}) can be rewritten as
\[
a^*(t) \in \arg\min_{a\in \Lambda} \Big [ F_{CV} \left( t,\X^*\left( t\right), 
\partial_xv\left(
t,\X^*\left( t\right) \right) ;a \right) \Big ]
.\]
%%% GIORGIO ABBOZZO UN CAMBIAMENTO.
Suppose the existence of a Borel function $\phi:[0,T] \times H \rightarrow \R$
such that 
 for any $(t,y) \in [0,T] \times H $,
 $\phi(t,y) \in  \arg\min_{a \in \Lambda} \big  ( F_{CV}\left(t,y, 
 \partial_xv(t,y);a\right) \big )$. 
%We set $a^*(t) = \phi(t, \X^*(t)), t \in [0,T]$.
% OLD
%Suppose that 
% for any $(t,y) \in [0,T] \times H $,
% $\phi(t,y) =  \arg\min_{a \in \Lambda} \big  ( F_{CV}\left(t,y, 
% \partial_xv(t,y);a\right) \big )$
%is measurable and single-valued.

Suppose that the equation 
\begin{equation}
\label{eq:stateFeed}
\left \{
\begin{array}{l}
\ud \X(t) = \left ( A\X(t)+ b(t,\X(t),\phi(t,\X(t) \right ) \ud t + \sigma(t,\X(t)) \ud \W_Q(t)\\[5pt]
\X(s)=x,
\end{array}
\right.
\end{equation}
admits a unique mild solution $\X^*$.
We set $a^*(t) = \phi(t, \X^*(t)), t \in [0,T]$.
 Suppose moreover that 
 \begin{equation}  \label{EFeedAdm}
 \int_s^T l(r,\X^*(r), a^*(r)) dr > - \infty \ {\rm a.s.}
 \end{equation}

Now  \eqref{EFeedAdm} and Remark \ref{RFeedback} 1.
imply that $a^*(\cdot)$ is admissible.
Then $\X^*$ is the optimal trajectory of the state variable related to the optimal control $a^*(t)$.
%% OLD $a^*(t) = \phi(t, \X^*(t)), t \in [0,T]$.
 The function $\phi$ is
called  \emph{optimal feedback} of the system since it gives 
an optimal control as a function of the state.
%OLD the optimal control as a function of the state.
\end{enumerate}
\end{Remark}

\begin{Remark} 
Observe that, using exactly the same arguments we used in this section one could treat the (slightly) more general case in which $b$ has the form
\[
b(t,x,a)= b_0(t,x) + b_g(t,x,a) + b_i(t,x,a),
\]
where $b_g$ and $b_i$ satisfy condition of Theorem \ref{th:decompo-sol-HJB}
and $b_0: [0,T] \times H \rightarrow H$ is continuous. In this case the addendum $b_0$ can be included in the expression of $\mathscr{L}_0$ that becomes 
\begin{equation}
\label{eq:def-L0-b}
\left \{
\begin{array}{l}
D(\mathscr{L}_0^{b_0}):= \left \{ \varphi \in C^{1,2}([0,T]\times H) \; : \; \partial_x\varphi \in C([0,T]\times H ; D(A^*)) \right \} \\[6pt]
\mathscr{L}_0^{b_0} (\varphi) := \partial_s \varphi+  \left \langle A^* \partial_x \varphi, x \right\rangle + \left \langle \partial_x \varphi, b_0(t,x) \right\rangle + \frac{1}{2} Tr \left [ \sigma(s,x) \sigma^*(s,x) \partial_{xx}^2 \varphi \right ].
\end{array}
\right .
\end{equation}
Consequently in the definition of regular solution the operator $\mathscr{L}_0^{b_0}$ appears instead $\mathscr{L}_0$.
\end{Remark}

\section{An example}
\label{sec:example}
We describe in this section an example where the techniques developed in the previous part of the paper can be applied. It is rather simple but some ``missing'' regularities and continuities show up so that it cannot be treated by using the standard techniques (for more details see Remark \ref{rm:discussion-example}).

Denote by $\Theta\colon \mathbb{R} \to \mathbb{R}$ the Heaviside function
\[
\left \{
\begin{array}{l}
\Theta\colon \mathbb{R} \to \mathbb{R}\\
\Theta\colon y\mapsto 
\left\{
\begin{array}{ll}
1 & \text{if } y \geq 0\\
0 & \text{if } y < 0.
\end{array}
\right .
\end{array}
\right .
\]

Fix $T>0$. Let $\rho,\beta$ be two real numbers, $\psi\in D(A^*) \subseteq H$ an eigenvector\footnote{Examples where the optimal control distributes as an eigenvector of $A^*$ arise in applied examples, see for instance \cite{boucekkine17growth,Fabbri16} for some economic deterministic examples. In the mentioned cases the operator $A$ is elliptic and self-adjoint.} for the operator $A^*$ corresponding to an eigenvalue $\lambda\in \mathbb{R}$, $\phi$ an element of $H$ and $W$ a standard real (one-dimensional) Wiener process. We consider the case where $\Lambda = \mathbb{R}$ (i.e. we consider real-valued controls). Let us take into account a state equation of the following specific form:
\begin{equation}
\label{eq:state-example}
\left \{
\begin{array}{l}
\ud \X(t) = \left ( A\X(t)+ a(t)\phi \right ) \ud t + \beta\X(t) \ud W(t)\\[5pt]
\X(s)=x.
\end{array}
\right .
\end{equation}
The operator $\mathscr{L}_0$ specifies then as follows:
\begin{equation}
\label{eq:def-L0-example}
\left \{
\begin{array}{l}
D(\mathscr{L}_0):= \left \{ \varphi \in C^{1,2}([0,T]\times H) \; : \; \partial_x\varphi \in C([0,T]\times H ; D(A^*)) \right \} \\[8pt]
\mathscr{L}_0 (\varphi)(s,x) := \partial_s \varphi(s,x)+  \left \langle A^* \partial_x \varphi (s,x), x \right\rangle + \frac{1}{2}\beta^2 \left \langle x, \partial_{xx}^2 \varphi(s,x)(x)\right\rangle.
\end{array}
\right .
\end{equation}
Denote by $\alpha$ the real constant $\alpha := \frac{-\rho + 2\lambda + 
\beta^2}{\left\langle \phi, \psi \right\rangle^2}$. We take into account the 
 functional
\begin{multline}
\label{eq:functional-example}
J(s,x;a(\cdot))=\mathbb{E}\bigg[ \int_{s}^{T} e^{-\rho r} \Theta \left ( \left \langle \X(r;s,x,a(\cdot)), \psi \right\rangle \right ) a^2(r) \ud r \\
+ e^{-\rho T} \alpha \Theta \left ( \left \langle \X(T;s,x,a(\cdot)), \psi \right\rangle \right ) \left \langle \X(T;s,x,a(\cdot)), \psi \right\rangle^2 \bigg].
\end{multline}
The Hamiltonian associated to the problem is given by
\begin{equation}
\label{eq:Hamiltonian-example}
F(s,x,p):= \inf_{a\in \mathbb{R} } F_{CV}(s,x,p;a),
%\left\{  \left \langle p, a\phi \right\rangle +
% e^{-\rho s} \Theta \left ( \left \langle x, \psi \right\rangle \right ) a^2 \right\}
\end{equation}
where 
$$ F_{CV}(s,x,p;a) =
 \langle p, a \phi \rangle +
 e^{-\rho s} \Theta \left (\langle x, \psi \right\rangle) a^2.
$$
Standard calculations give
\begin{equation} \label{E40}
F(s,x,p) = \left \{
\begin{array}{ccc}
- \infty &:& p \neq 0, \langle x, \psi \rangle < 0 \\
- \frac{\langle p, \phi \rangle^2}{4} &:& {\rm otherwise}.
\end{array}
\right.
\end{equation}
%We observe immediately that if $\langle x,\psi\rangle < 0$ 
% then $\partial_x(s,x) = 0$ and so
% $F(s,x, \partial_x(s,x))$ is always finite.
The HJB equation is
\begin{equation}
\label{eq:HJB-example}
\left\{
\begin{array}{l}
\mathscr{L}_0(v)(s,t) = - F(s,x,\partial_{x} v (s,x)),\\ [8pt]
v(T,x)=g(x) := e^{-\rho T} \alpha \Theta \left ( \left \langle x, \psi \right\rangle \right ) \left \langle x, \psi \right\rangle^2.
\end{array}
\right.
\end{equation}

\begin{Lemma}
The function
\begin{equation}
\label{eq:defv-example}
\left \{
\begin{array}{l}
v\colon [0,T] \times H \to \mathbb{R}\\ 
v \colon (s,x) \mapsto \alpha e^{-\rho s}
\left\{
\begin{array}{ll}
0 & \text{if }  \langle x, \psi \rangle \leq 0\\
\langle x, \psi \rangle^2  & \text{if } \langle x, \psi \rangle >0
\end{array}
\right .
\end{array}
\right .
\end{equation}
(that we could write in a more compact form as $v (s,x) = \alpha e^{-\rho s} \Theta \left ( \left \langle x, \psi \right\rangle \right ) \langle x, \psi \rangle^2$) is a strong solution of (\ref{eq:HJB-example}).
\end{Lemma}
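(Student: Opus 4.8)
The plan is to verify, one by one, the three requirements of Definition \ref{def:strong-sol} for the candidate $v$. Throughout write $u:=\langle x,\psi\rangle$ and note that $v(s,x)=\alpha e^{-\rho s}\theta(u)$ with $\theta(u):=\Theta(u)u^2=(u^+)^2$, where $u^+:=\Theta(u)u=\max(u,0)$. The elementary but decisive fact is that $\theta\in C^1(\R)$ with $\theta'(u)=2u^+$ globally Lipschitz, while $\theta''(u)=2\Theta(u)$ jumps at $u=0$, so $\theta\notin C^2$. This is precisely why $v$ fails to be a classical solution and the strong-solution machinery is needed.

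First I would settle the regularity. Since $\theta\in C^1$ and $\psi$ is a fixed vector, $v\in C^0([0,T]\times H)$ and $v\in C^{0,1}([0,T[\times H)$, with Fr\'echet derivative
\[
\partial_x v(s,x)=2\alpha e^{-\rho s}\,\Theta(\langle x,\psi\rangle)\langle x,\psi\rangle\,\psi = 2\alpha e^{-\rho s}(\langle x,\psi\rangle)^+\psi.
\]
Because $\psi\in D(A^*)$ and the scalar factor $(s,x)\mapsto 2\alpha e^{-\rho s}(\langle x,\psi\rangle)^+$ is jointly continuous and, being built from the $1$-Lipschitz map $u\mapsto u^+$, uniformly continuous on bounded sets, we get $\partial_x v\in UC([0,T[\times H;D(A^*))$, as demanded.

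Second, I would check property (I) by evaluating the Hamiltonian along $\partial_x v$; this is the only genuinely delicate point of the \emph{statement}. Minimizing the current value Hamiltonian (\ref{eq:Hamiltonian-example}) in $a$ shows $F(s,x,p)=-\infty$ as soon as $\langle p,\phi\rangle\neq 0$ and $\langle x,\psi\rangle<0$. However, exactly on $\{\langle x,\psi\rangle<0\}$ one has $\Theta(\langle x,\psi\rangle)=0$, hence $\partial_x v(s,x)=0$, so the singular branch is never reached. A direct computation then yields
\[
F(s,x,\partial_x v(s,x))=-\alpha^2\langle\phi,\psi\rangle^2\,e^{-\rho s}\big((\langle x,\psi\rangle)^+\big)^2,
\]
which is finite and continuous on all of $[0,T]\times H$ and in particular extends continuously to $s=T$.

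Third, for property (II) I would smooth $\theta$: take $\theta_n:=\theta*\rho_n$ for a standard mollifier $\rho_n$ and set $v_n(s,x):=\alpha e^{-\rho s}\theta_n(\langle x,\psi\rangle)$, $g_n:=v_n(T,\cdot)$ and $h_n:=\mathscr{L}_0(v_n)$. Each $v_n$ is smooth with $\partial_x v_n=\alpha e^{-\rho s}\theta_n'(\langle x,\psi\rangle)\psi\in D(A^*)$, so $v_n\in D(\mathscr{L}_0)$ and is trivially a classical solution of the approximating problem (\ref{eq:approximating}). Using $A^*\psi=\lambda\psi$ one computes
\[
h_n(s,x)=\alpha e^{-\rho s}\Big[-\rho\,\theta_n(u)+\lambda u\,\theta_n'(u)+\tfrac{1}{2}\beta^2 u^2\,\theta_n''(u)\Big],\qquad u=\langle x,\psi\rangle .
\]
Since $\theta\in C^1$, $\theta_n\to\theta$ and $\theta_n'\to\theta'$ locally uniformly, which disposes of the first two terms and also gives $v_n\to v$ and $g_n\to g$ locally uniformly. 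The remaining term is where the main obstacle sits: $\theta_n''\to 2\Theta$ fails precisely at $u=0$. The resolution is that $\theta_n''$ is uniformly bounded (by $\sup\theta''=2$) and coincides with $\theta''$ once $u$ is bounded away from $0$ and $n$ is large, while the multiplying factor $u^2$ — exactly the degenerate diffusion coefficient $\langle x,\psi\rangle^2$ produced by $\sigma(t,x)=\beta x$ — vanishes at the singular locus $u=0$. Hence $u^2\theta_n''(u)\to 2(u^+)^2$ uniformly on compacts, and, collecting the three limits and using $\alpha\langle\phi,\psi\rangle^2=-\rho+2\lambda+\beta^2$,
\[
h_n\longrightarrow \alpha^2\langle\phi,\psi\rangle^2\,e^{-\rho s}(u^+)^2=-F(\cdot,\cdot,\partial_x v),
\]
locally uniformly, which is exactly the convergence required in Definition \ref{def:strong-sol}(II). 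I expect the control of $u^2\theta_n''$ to be the crux of the argument; it is precisely the spot where \emph{not} requiring $\partial_x v_n\to\partial_x v$ (only $\mathscr{L}_0 v_n\to -F$) is what lets the verification go through despite the loss of $C^2$ regularity across $\{\langle x,\psi\rangle=0\}$.
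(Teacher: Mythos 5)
Your proof is correct, and the first two steps (the $C^{0,1}$ regularity with $\partial_x v\in UC([0,T[\times H;D(A^*))$, and the verification that $F(s,x,\partial_x v(s,x))$ is finite and continuous because $\partial_x v$ vanishes exactly where the Hamiltonian degenerates) coincide with the paper's argument. Where you genuinely diverge is in the construction of the approximating sequence for property (II): the paper does not mollify, but instead perturbs the exponent, taking $v_n(s,x)=\alpha_n e^{-\rho s}\Theta(\langle x,\psi\rangle)\langle x,\psi\rangle^{2+1/n}$ with the constant $\alpha_n=\frac{-\rho+(2+1/n)\lambda+\frac12\beta^2(2+1/n)(1+1/n)}{-\frac14(2+1/n)^2\langle\phi,\psi\rangle^2}$ adjusted so that $\mathscr{L}_0 v_n$ is again an explicit multiple of $\Theta(\langle x,\psi\rangle)\langle x,\psi\rangle^{2+1/n}$; the extra factor $\langle x,\psi\rangle^{1/n}$ restores the $C^2$ regularity across $\{\langle x,\psi\rangle=0\}$ for the same structural reason your mollification works (the degenerate diffusion coefficient $\langle x,\psi\rangle^2$ kills the singularity of the second derivative at the interface). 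Both constructions are legitimate because Definition \ref{def:strong-sol} only asks $h_n=\mathscr{L}_0 v_n\to -F(\cdot,\cdot,\partial_x v)$ and not $\partial_x v_n\to\partial_x v$ — although in both constructions the gradients do in fact converge. Your route is more robust and would survive replacing $(u^+)^2$ by a general $C^1$ function of $\langle x,\psi\rangle$ with piecewise continuous second derivative vanishing suitably at the interface; the paper's choice buys an extra interpretive payoff that a mollification does not: each $v_n$ is itself, in closed form, the value function of a nearby control problem with running cost $l_n(r,x,a)=e^{-\rho r}\Theta(\langle x,\psi\rangle)\langle x,\psi\rangle^{1/n}a^2$ and terminal cost $g_n$, which is used in the discussion following the lemma. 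One small point worth making explicit in your version: since $\theta$ grows quadratically, you should take the mollifiers $\rho_n$ compactly supported (say in $[-1/n,1/n]$) so that $\theta*\rho_n$ is well defined and the identities $\theta_n''=2\Theta*\rho_n\in[0,2]$ and $\theta_n''(u)=\theta''(u)$ for $|u|\ge 1/n$ hold exactly as you use them.
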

\begin{proof}
We verify all the requirements of Definition \ref{def:strong-sol}. Given the form of $g$ in (\ref{eq:HJB-example}) one can easily see that $g\in C(H)$. The first derivatives of $v$ are given by
\[
\partial_s v(s,x) = -\rho \alpha e^{-\rho s} \Theta \left ( \left \langle x, \psi \right\rangle \right ) \langle x, \psi \rangle^2
\]
and
\begin{equation} \label{E41}
\partial_x v(s,x) = 2\alpha e^{-\rho s} \Theta \left ( \left \langle x, \psi \right\rangle \right ) \langle x, \psi \rangle\psi,
\end{equation}
so the regularities of $v$ demanded in the first two lines of Definition \ref{def:strong-sol} are easily verified. Injecting \eqref{E41} into 
\eqref{E40} yields
%\begin{multline}
\begin{equation}
\label{eq:Findxv}
F(s,x,\partial_x v(s,x)) =
% \inf_{a\in \mathbb{R} }\left\{  a 
%2\alpha e^{-\rho s} \Theta \left ( \left \langle x, \psi \right\rangle \right ) \langle x, \psi \rangle
%\left \langle \psi , \phi \right\rangle + e^{-\rho s} \Theta \left ( \left \langle x, \psi \right\rangle \right ) a^2 \right\}\\
 - \alpha^2 \Theta \left ( \left \langle x, \psi \right\rangle \right ) \left \langle x, \psi \right\rangle^2 \left \langle \phi, \psi \right\rangle^2 e^{-\rho s},
%\end{multline}
\end{equation}
so the function  $(s,x) \mapsto F(s,x,\partial_xv(s,x))$
from $ \left[ 0,T\right] \times H$ to $H$ is finite and continuous.
% in the two variables $(s,x)$.

We define, for any $n\in \mathbb{N}$, 
$\alpha_n := \frac{-\rho + (2+1/n) \lambda + \frac{1}{2}\beta^2 (2+1/n) (1+1/n)}{-\frac{1}{4} (2+1/n)^2 \langle \phi, \psi \rangle^2}$. We consider the  approximating sequence
\[
v_n(s,x) := \alpha_n e^{-\rho s} \Theta \left ( \left \langle x, \psi \right\rangle \right ) \langle x, \psi \rangle^{2+1/n}.
\]
The first derivative of $v_n$ w.r.t. $s$ and and first and second derivative of $v_n$ w.r.t. $x$ are given, respectively, by
\[
\partial_s v_n(s,x) = -\rho \alpha_n e^{-\rho s} \Theta \left ( \left \langle x, \psi \right\rangle \right ) \langle x, \psi \rangle^{2+1/n},
\]
\[
\partial_x v_n(s,x) = (2+1/n) \alpha_n e^{-\rho s} \Theta \left ( \left \langle x, \psi \right\rangle \right ) \langle x, \psi \rangle^{1+1/n} \psi
\]
and
\[
\partial_{xx}^2 v_n(s,x) = (2+1/n) (1+1/n) \alpha_n e^{-\rho s} \Theta \left ( \left \langle x, \psi \right\rangle \right ) \langle x, \psi \rangle^{1/n} \psi \otimes \psi.
\]
so it is straightforward to see that, for any 
$n \in \mathbb{N}$,  $v_n\in D(\mathscr{L}_0)$. Moreover, if we define
\begin{equation}
\label{eq:defgn} 
g_n(x):= e^{-\rho T} \alpha_n \Theta \left ( \left \langle x, \psi \right\rangle \right ) \left \langle x, \psi \right\rangle^{2+1/n}
\end{equation}
and 
\begin{equation}
\label{eq:defhn} 
h_n(s,x) := -\frac{1}{4} \alpha_n^2 e^{-\rho s} (2+1/n)^2 \Theta \left ( \left \langle x, \psi \right\rangle \right ) \left \langle \phi, \psi \right\rangle^2 \left \langle x, \psi \right\rangle^{2 + 1/n},
\end{equation}
(by an easy direct computation) we can see that $v_n$ is a classical solutions of the  problem
\[
\left\{
\begin{array}{l}
\mathscr{L}_0(v)(s,t) = h_n(s,x),\\ [8pt]
v(T,x)=g_n(x).
\end{array}
\right.
\]
The convergences asked in point (ii) of part (II) of Definition \ref{def:strong-sol} are straightforward.
\end{proof}

\begin{Lemma}
An optimal control of the problem (\ref{eq:state-example})-(\ref{eq:functional-example}) can be written in feedback form as
%%% GIORGIO AL POSTO DI $x$ METTERE \X(t)?$ %%% GIUSTO COSI?
\begin{equation}
\label{eq:explicit-optimacontrol-example}
a(t) = -\alpha \Theta \left ( \left \langle \X(t), \psi \right\rangle \right ) \langle \X(t),\psi \rangle \langle \phi, \psi \rangle.
\end{equation}

 %%% OLD
% \begin{equation}
% \label{eq:explicit-optimacontrol-example}
% a(t) = -\alpha \Theta \left ( \left \langle x, \psi \right\rangle \right ) \langle x,\psi \rangle \langle \phi, \psi \rangle.
% \end{equation}
The corresponding optimal trajectory is given by the unique solution of the mild equation
\begin{multline}
\label{eq:closedloop-example}
\X(t) = e^{(t-s)A} x - \int_s^t e^{(t-r)A} \phi \alpha \Theta \left ( \left \langle \X(r), \psi \right\rangle \right ) \langle \X(r),\psi \rangle \langle \phi, \psi \rangle dr \\
+ \beta \int_s^t e^{(t-r)A} \X(r) dW(r).
\end{multline}
\end{Lemma}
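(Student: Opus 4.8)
The plan is to recognize the claimed feedback as the pointwise minimizer of the current value Hamiltonian evaluated along $\partial_xv$, then to establish well-posedness of the associated closed-loop equation, and finally to invoke the verification theorem (Theorem \ref{th:verification}) together with Remark \ref{RFeedback} to deduce optimality of the feedback control and the identity $v(s,x)=V(s,x)$.

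First I would minimize, for fixed $(s,x)$, the map $a\mapsto F_{CV}(s,x,\partial_xv(s,x);a)=\langle\partial_xv(s,x),a\phi\rangle+e^{-\rho s}\Theta(\langle x,\psi\rangle)a^{2}$ over $a\in\R$. Using the explicit expression \eqref{E41} for $\partial_xv$, on the region $\langle x,\psi\rangle>0$ one has $\Theta\equiv 1$ and the map is a strictly convex quadratic in $a$ whose unique minimizer is obtained by cancelling the common factor $e^{-\rho s}$, which gives exactly $-\alpha\langle x,\psi\rangle\langle\phi,\psi\rangle$; on the region $\langle x,\psi\rangle\le 0$ one has $\partial_xv(s,x)=0$, so $F_{CV}$ does not depend on $a$ and the selection $a=0$ is a (non unique) minimizer. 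Both cases are summarized by the single formula \eqref{eq:explicit-optimacontrol-example}, so this feedback realizes the infimum defining $F$, i.e.\ it satisfies \eqref{eq:condsuff} along any trajectory.

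Next I would substitute the feedback into the state equation \eqref{eq:state-example} to obtain \eqref{eq:closedloop-example}. The resulting drift $x\mapsto-\alpha\langle\phi,\psi\rangle\,\Theta(\langle x,\psi\rangle)\langle x,\psi\rangle\,\phi$ is globally Lipschitz and of linear growth, because $y\mapsto\Theta(y)y$ (the positive part) is $1$-Lipschitz and $x\mapsto\langle x,\psi\rangle$ is a bounded linear functional, while the diffusion $x\mapsto\beta x$ is linear, hence Lipschitz and of linear growth as well. Thus the closed-loop coefficients satisfy Hypothesis \ref{hp:onbandsigma}, and Theorem 3.3 of \cite{GawareckiMandrekar10} yields a unique continuous mild solution $\X^{*}$ of \eqref{eq:closedloop-example}; setting $a^{*}(t):=-\alpha\,\Theta(\langle\X^{*}(t),\psi\rangle)\langle\X^{*}(t),\psi\rangle\langle\phi,\psi\rangle$ recovers the feedback control, which is predictable since $\X^{*}$ is continuous and adapted.

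Finally I would check the hypotheses of Theorem \ref{th:verification} and conclude. The finiteness of the value function follows by testing with $a\equiv 0$, and the bound $|\partial_xv(s,x)|\le 2|\alpha|\,|\psi|^{2}|x|$ is immediate from \eqref{E41}, so $\partial_xv$ has polynomial growth. For the decomposition \eqref{eq:b} I would take $b_g=0$ and $b=b_i$ as in Example \ref{EThSol-HJB}. The main obstacle is precisely the verification of \eqref{eq:conv-ucp-b} \emph{for every} admissible control: here one exploits that $\langle\partial_xv_n(r,\X(r))-\partial_xv(r,\X(r)),\phi\rangle$ carries the factor $\Theta(\langle\X(r),\psi\rangle)$ and therefore vanishes on $\{\langle\X(r),\psi\rangle<0\}$, so the only contribution comes from $\{\langle\X(r),\psi\rangle\ge 0\}$, where $a(\cdot)$ is square-integrable against the running cost and $\partial_xv_n\to\partial_xv$ uniformly on compacts (a direct check on the sequence $v_n$ of the previous lemma), the moments of $\sup_{r}|\X(r)|$ being finite by Proposition 7.4 of \cite{DaPratoZabczyk14}. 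Admissibility of $a^{*}$ is then clean: the running cost $e^{-\rho r}\Theta(\langle\X^{*}(r),\psi\rangle)(a^{*}(r))^{2}$ is nonnegative, hence a.s.\ bounded below, so Remark \ref{RFeedback}(1) forces $a^{*}\in\mathcal{U}_s$. Since \eqref{eq:condsuff} holds by construction, Theorem \ref{th:verification}(ii) gives that $a^{*}$ is optimal at $(s,x)$ and that $v(s,x)=V(s,x)$, which is the assertion of the lemma.
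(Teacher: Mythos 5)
Your proposal is correct and follows essentially the same route as the paper: compute $\arg\min_{a}F_{CV}(s,x,\partial_xv(s,x);a)$ using \eqref{E41} to identify the feedback, then invoke Theorem \ref{th:verification} and Remark \ref{RFeedback} (point 2) with $b_g=0$, $b=b_i$. You actually supply more detail than the paper, which declares the hypotheses of the verification theorem ``easily verified''; the one imprecision is your appeal to square-integrability of $a(\cdot)$ against the running cost to justify \eqref{eq:conv-ucp-b} --- admissibility does not guarantee this here because the running cost is nonnegative, and the clean argument is instead that $\int_s^T|a(r)|\,dr<\infty$ a.s.\ by the definition of mild solution of \eqref{eq:state}, combined with the a.s.\ uniform convergence of $\partial_xv_n-\partial_xv$ on the compact range of $\X$.
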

\begin{proof}
Observe that the hypotheses of Theorem \ref{th:verification} are verified: the regularity and the growth of $v$ are a simple consequence of its Definition (\ref{eq:defv-example}) and taking $b_i (t,x,a) = b(t,x,a) = a(t) \psi$ the condition (\ref{eq:conv-ucp-b}) is easily verified.

%, indeed, if we denote by $\X(\cdot)$ the unique (see Theorem 7.2, page 188 of \cite{DaPratoZabczyk14}) solution of (\ref{eq:closedloop-example}),
%  ho modificato cassando tutto e scrivendo  ``is easily verified.'' (in blu sopr

% \begin{multline}
% \sup_{t\in [s,T]} \left | \int_s^t \left \langle \partial_x v_n (r,\X(r)) - \partial_x v (r,\X(r)), b_i(r,\X(r), a(r)) \right\rangle \right | \ud r  \\
% \leq \int_s^T \left | \partial_x v_n (r,\X(r)) - \partial_x v (r,\X(r)) \right | \, \left | b_i(r,\X(r), a(r)) \right | \ud r 
% \end{multline}
% and, called $C = | \alpha \langle \phi, \psi \rangle | |\psi|$,
% \begin{multline}
% \mathbb{E} \int_s^T \left | \partial_x v_n (r,\X(r)) - \partial_x v (r,\X(r))\right | \, \left | b_i(r,\X(r), a(r)) \right | \ud r \\
% = C \mathbb{E} \int_s^T e^{-\rho r} \Theta \left ( \left \langle \X(r), \psi \right\rangle \right ) 
% \left | 2 \psi \alpha - (2+1/n) \alpha_n  \langle x, \psi \rangle^{1/n} \right | \, \left | \langle \X(r),\psi \rangle \right|^2  \ud r \\
% \leq 
% C \mathbb{E} \int_s^T \left | 2 \psi \alpha - (2+1/n) \alpha_n  \langle \X(r), \psi \rangle^{1/n} \right | \, \left | \langle \X(r),\psi \rangle \right|^2  \ud r 
% \end{multline}
% which, since for any $p>1$, $\sup_{s\in [0,T]} \mathbb{E} [|X(s)|^p] < +\infty$ (see Theorem 7.2 page 188 of \cite{DaPratoZabczyk14}), converges to $0$ thanks for instance to Lemma 1.51 page 17 of \cite{FabbriGozziSwiech16}.

The optimality of (\ref{eq:explicit-optimacontrol-example}) is now just a consequence of point 2. of Remark \ref{RFeedback} once we observe that
\begin{eqnarray*}
 \quad \quad && \arg\min_{a\in \mathbb{R}} F_{CV}(s,x,\partial_xv(s,x);a) \\
&=&\arg\min_{a\in \mathbb{R}}\left\{  a 2\alpha e^{-\rho s} \Theta \left ( \left \langle x, \psi \right\rangle \right ) \langle x, \psi \rangle \left \langle \psi , \phi \right\rangle + e^{-\rho s} \Theta \left ( \left \langle x, \psi \right\rangle \right ) a^2 \right\}\\
&=&\left \{ 
\begin{array}{ll}
-\alpha \langle x,\psi \rangle \langle \phi, \psi \rangle & \text{if } \langle x,\psi \rangle \geq 0\\
\mathbb{R} & \text{if } \langle x,\psi \rangle < 0,
\end{array}
\right .
\end{eqnarray*}
so we can set 
$$\phi(s,x) = -\alpha \Theta \left ( \left \langle x, \psi \right\rangle \right ) \langle x,\psi \rangle \langle \phi, \psi \rangle.$$
%%% GIORGIO NON DIPENDE DA $s$ VERO??? POI NELLA VERSION PRECEDENTE
%% \phi DEPENDE DA $v$. DOVREBBE DIPENDERE DA $x$ VERO?
% begin{eqnarray*}
% \phi(s,v)&:=& 
% \arg\min_{a\in \mathbb{R}} F_{CV}(s,x,\partial_xv(s,x);a) \\
% &=&\arg\min_{a\in \mathbb{R}}\left\{  a 2\alpha e^{-\rho s} \Theta \left ( \left \langle x, \psi \right\rangle \right ) \langle x, \psi \rangle \left \langle \psi , \phi \right\rangle + e^{-\rho s} \Theta \left ( \left \langle x, \psi \right\rangle \right ) a^2 \right\}\\
% &=&\left \{ 
% \begin{array}{ll}
% -\alpha \langle x,\psi \rangle \langle \phi, \psi \rangle & \text{if } \langle x,\psi \rangle \geq 0\\
% \mathbb{R} & \text{if } \langle x,\psi \rangle < 0.
% \end{array}
% \right .
% \end{eqnarray*}
% Observe that in (\ref{eq:explicit-optimacontrol-example}), in the case $\langle x,\psi \rangle < 0$, we consider the selection always equal to $0$.
\end{proof}
%%% GIORGIO. HO PROPOSTO LA MODIFICA PRECEDENTE
Observe that the elements $v_n$ of the approximating sequence are indeed the value functions of the optimal control problems having the same state equation (\ref{eq:state-example}) with running cost function 
$$ l_n(r,x,a) = e^{-\rho r} \Theta(\langle x, \psi\rangle) 
\langle x, \psi\rangle^{1/n} a^2
$$
and terminal cost function $g_n$ (defined in (\ref{eq:defgn})). The corresponding Hamiltonian is given by $(\blu{-}h_n)$ where $h_n$ is defined in (\ref{eq:defhn}).

\medskip

Even if it is rather simple example, it is itself of some interest because, as far as we know, no explicit (i.e. with explicit expressions of the value function and of the approximating sequence) example of \emph{strong solution} for second order HJB in infinite dimension is published so far.

\begin{Remark}
\label{rm:discussion-example}
In the example some non-regularities arise.
\begin{itemize}
 \item[(i)] The running cost function is 
$$l(r,x,a) = e^{-\rho r} \Theta(\langle x, \psi \rangle) a^2,$$ so
 for any choice of $a\neq 0$, it is discontinuous at any $x\in H$ such that $\left \langle x , \psi \right\rangle =0.$
%$e^{-\rho r} \Theta \left ( \left \langle \X(r;s,x,a(\cdot)), \psi \right\rangle \right ) a^2(r)$ so, for any choice of $a\neq 0$, it is discontinuous at any $x\in H$ such that $\left \langle x , \psi \right\rangle =0$
 \item[(ii)] By \eqref{E40} the Hamiltonian $(s,x,p) \mapsto F(s,x,p)$ 
is not continuous
%on the whole space $[0,T] \times H \times H$ it is not continuous 
and even not finite. Indeed, for any non-zero $p\in H$ and for any $x\in H$ with $\left \langle x , \psi \right\rangle < 0$,
 its value is infinity. 
Conversely $F(s,x,\partial_xv(s,x))$ found in (\ref{eq:Findxv}) is always finite: observe that for any $x\in H$ with $\left \langle x , \psi \right\rangle \leq 0$, $\partial_xv(s,x)=0$.
 \item[(iii)] The second derivative of $v$ with respect to $x$ is well-defined on the points $(t,x)\in [0,T]\times H$ such that $\left \langle x , \psi \right\rangle <0$ (where its value is $0$) and it is well-defined on the points $(t,x)\in [0,T]\times H$ such that $\left \langle x , \psi \right\rangle>0$ (where its value is $2 \alpha e^{-\rho t} \psi\otimes\psi$) so it is discontinuous at all the points $(t,x)\in [0,T]\times H$ such that $\left \langle x , \psi \right\rangle =0$.
\end{itemize}
Thanks to points (i) and (ii) one cannot deal with the example by using existing results. Indeed, among various techniques, only solutions defined through a perturbation approach in space of square-integrable functions with respect to some invariant measure (see e.g. \cite{Ahmed03,GoldysGozzi06} and Chapter 5 of \cite{FabbriGozziSwiech16}) can deal with non-continuous running cost but they can (at least for the moment) only deal with problems with additive noise and satisfying the \emph{structural condition} and it is not the case here. Moreover none of the verification results we are aware of can deal at the moment with Hamiltonian with discontinuity in the variable $p$.
\end{Remark}

\color{black}

% \begin{Remark} \label{RFinal}
% In the definition of strong solution given
% in Definition \ref{def:strong-sol}
% one could substitute the assumption
% $v  \in C^{0,1}([0,T] \times H)$ 
% with $v  \in C^{0,1}([0,T) \times H)  \cap  C^{0}([0,T] \times H)$
% such that $\partial_x v$ has polynomial growth. 
% This, together with the fact that $\X$ has all moments
% (see Theorem 7.4, Chapter 7 of \cite{DaPratoZabczyk14}),
% would permit to pass to the limit $T_0 \rightarrow T$
% in the last step of the proof of Theorem \ref{th:decompo-sol-HJB}.
%
% 
% \end{Remark}

% \section{Conclusions}
% \label{sec:conclusions}

\bigskip

\bigskip

\bigskip
{\bf ACKNOWLEDGEMENTS:} The authors are grateful to the Editor and to the Referees
for having carefully red the paper and stimulated us to improve the first submitted version.
 This article was partially written during the stay of the
second named author 
%in the Bernoulli Center (EPFL Lausanne) and 
at Bielefeld University, SFB 1283 (Mathematik).
%The research was partially 
%supported by the ANR Project MASTERIE 2010 BLAN-0121-01.
% The work of the first named author has been developed in the framework of the center of excellence LABEX MME-DII (ANR-11-LABX-0023-01)

\begin{small}

\bibliographystyle{plain}

\bibliography{bibliocontrol}

\end{small}

\end{document}